\documentclass{article}
\usepackage{authblk}
\usepackage{amsmath}
\usepackage{amssymb}
\usepackage{amsthm}
\usepackage{graphicx}
\usepackage{xcolor}
\usepackage{enumitem}
\usepackage{hyperref}
\usepackage{cite}
\newcommand{\be}{\begin{equation}}

\newcommand{\ee}{\end{equation}}

\newcommand{\bea}{\begin{eqnarray}}

\newcommand{\eea}{\end{eqnarray}}

\newcommand{\beastar}{\begin{eqnarray*}}

\newcommand{\eeastar}{\end{eqnarray*}}

\newcommand{\ai}{A}
\newcommand{\bb}{B}
\newcommand{\cc}{C}
\newcommand{\dd}{D}
\newcommand{\eee}{E}
\newcommand{\ff}{F}
\newcommand{\ii}{I}
\newcommand{\jj}{J}
\newcommand{\kk}{K}
\newcommand{\li}{L}

\newtheorem{theorem}{Theorem}[section]
\newtheorem{lemma}[theorem]{Lemma}

\newtheorem{corollary}[theorem]{Corollary}

\theoremstyle{definition}
\newtheorem{definition}[theorem]{Definition}

\theoremstyle{remark}
\newtheorem{remark}[theorem]{Remark}

\title{Potential Carroll Structures and Special Carrollian Manifolds}

\author[1]{Samuel Blitz \thanks{blitz@math.muni.cz}}

\affil[1]{Department of 
Mathematics and Statistics, Masaryk University, Brno, Czech Republic}
\author[2]{Gabriel Herczeg \thanks{gabe.herczeg@gmail.com}}
\affil[2]{Brown University, Providence, RI, 02912, USA  }
\author[3]{David McNutt \thanks{dmcnutt@mil.no}}
\affil[3]{The Royal Norwegian Naval Academy, N-5165 Bergen, Norway }

\date{\today}

\begin{document}

\maketitle

\begin{abstract}
It is well-known that unlike space-like and time-like hypersurfaces, null hypersurfaces in Lorentzian manifolds do not naturally inherit an affine connection from the spacetime in which they are embedded. On the other hand, recent developments in flat-space holography motivate the study of the intrinsic geometry of null hypersurfaces such as null infinity and black hole event horizons. Here we initiate the study of \emph{potential Carroll structures}, a candidate for an intrinsic description of null hypersurfaces which may be particularly useful in settings where conformal isometries are of interest, and we explore their relationship to another such candidate intrinsic geometry, the special Carrollian manifolds.
\end{abstract}

\tableofcontents

\bigskip

\section{Introduction}
The holographic principle proposes that quantum gravitational information can be encoded in a quantum field theory defined on a lower-dimensional boundary of spacetime\cite{tHooft:1993dmi,Susskind:1994vu, Bousso:2002ju}.
While the best-studied realization of the holographic principle is the AdS/CFT correspondence
\cite{Maldacena:1997re,Gubser:1998bc,Witten:1998qj}, significant progress has been made over the last few years by applying holographic techniques to the asymptotically flat setting using two closely related frameworks: celestial holography \cite{He:2014laa, He:2014cra, Strominger:2014pwa, Pasterski:2015zua, Raclariu:2021zjz} and Carrollian holography \cite{Duval:2014uva, Donnay:2022wvx, Donnay:2022aba, Bagchi:2022emh}.  As the name suggests, celestial holography involves extracting quantum-gravitational data from  a  boundary conformal field theory defined on the celestial sphere, having co-dimension two relative to the  bulk spacetime. By contrast, Carrollian holography places the boundary conformal field theory on the whole of null infinity, having codimension one.

In light of these recent advances, physicists and mathematicians have turned to the study of the intrinsic geometry of null hypersurfaces, which arise naturally as both event horizons of black holes and the conformal boundary of asymptotically flat spacetimes, with renewed interest. As we (traditionally) occupy a four-dimensional spacetime, three-dimensional null hypersurfaces are of particular physical relevance. Indeed, it is perhaps not a coincidence that a variety of geometric structures that enjoy applications in mathematical physics can be imposed on three-manifolds that come equipped with a preferred direction. Among these are contact structures, Carrollian structures, CR structures, and a new structure discussed in this manuscript, the potential Carroll structure. Our primary focus will be on the relations between potential Carroll structures and a particular type of Carrollian structure---the special Carrollian manifold (SCM).

Manifolds with null metrics that are equipped with additional structure (such as an affine connection, a fundamental vector field, and/or an Ehresmann connection), are known as \textit{Carrollian} geometries~\cite{Duval:2014uva}. A \textit{Carrollian structure} is the minimal such geometry, being merely the triple $(M,g,\ell)$ where $M$ is a smooth manifold equipped with a null metric $g$ (with one-dimensional null space) and a fundamental vector field $\ell$ such that $g(\ell,\cdot) = 0$. While there is some contention regarding precisely \textit{how} one should further equip a Carrollian structure with a connection, one method is to pick out a null vector field $\ell$ along the hypersurface and a dual 1-form $\nu$ (called an Ehresmann connection), and then uniquely fix an affine connection $\nabla$ by demanding that it holds the degenerate metric $g$, the null vector field $\ell$, and the 1-form $\nu$ parallel and has ``minimal" torsion in a sense that will described in Definition~\ref{minimal-torsion-definition}. Such a structure $(M,g,\ell, \nu, \nabla)$ is called a special Carrollian manifold~\cite{BlitzMcNutt}; see Definition~\ref{def:SCM}. Note that another method of fixing the connection is by inducing the \textit{rigged} connection from the embedding spacetime~\cite{MarsSenovilla}, but we will not make use of this construction here.

On the other hand, given a manifold, a vector field, and a dual 1-form, other structures can be built beyond what is considered Carrollian. One prime example (when the manifold is odd-dimensional) is a contact structure, defined by the stipulation that the kernel of the 1-form is maximally non-integrable, in which case it is natural to select the vector field to encode the Reeb dynamics of the contact form. Alternatively, one may stipulate that the dual 1-form is a covariant potential for the (parallel) degenerate metric, that the fundamental vector field is parallel, and that the torsion of the connection is ``potential" in a way that will be detailed in Definition~\ref{minimal-torsion-definition}; this will be called a potential Carroll structure. In Definition~\ref{def:PCS} we will give a more thorough definition of this geometric structure.

A potential Carroll structure differs from a special Carrollian manifold in one critical way: the 1-form acts as a potential for the degenerate metric rather than being parallel. One of the primary goals of this manuscript is to establish conditions on when these two different Carrollian structures may be (uniquely) constructed from smaller data sets, and when these two different Carrollian structures may be transformed into one another. While some of the results proved here apply to manifolds of arbitrary dimension,  we will mostly focus on the 3-dimensional case where proofs are readily available, and where our results enjoy the most direct physical applications.

The structure of the paper is as follows. In Subsection~\ref{sec:notations}, we review the notation that will be used throughout this paper. In Section~\ref{sec:carollian}, we describe some geometric features of Carrollian structures which include both special Carrollian manifolds and potential Carroll structures. In Section~\ref{sec:SCMs} we construct the minimal data required to determine a special Carrollian manifold, and in Section~\ref{sec:PCSs}, we do the same for potential Carroll structures. Finally, in Section~\ref{sec:relations} we determine the conditions required to relate special Carrollian manifolds and potential Carroll structures.

\subsection{Notations and Conventions} \label{sec:notations}
We denote by $M^d$ a $d$-dimensional smooth manifold. We equip such a manifold with a degenerate metric $g$ with signature $(0, +, \ldots, +)$. While such a metric does not uniquely determine a connection, we may equip this manifold with an affine connection $\nabla$. For any connection $\nabla$, we denote the curvature tensor
$$R(X,Y)Z = (\nabla_X \nabla_Y - \nabla_Y \nabla_X - \nabla_{[X,Y]})Z\,,$$
where $X,Y,Z \in \Gamma(TM)$ and $[\cdot, \cdot]$ is the Lie bracket of vector fields. Similarly, the torsion is defined according to
$$T(X,Y) = \nabla_X Y - \nabla_Y X - [X,Y]\,.$$
When writing equations in this index-free notation, we will often $\odot$ as a binary operation indicating (unit normalized) symmetrization of the two tensors surrounding the operation.

Frequently, we will rely on Penrose's abstract index notation, using letters at the beginning of the latin alphabet to denote tensor products of the (co)tangent bundle on $M$, i.e. we may write $t^{ab}{}_{c} \in \Gamma(TM \otimes TM \otimes T^*M)$. In this notation, we use round brackets $(\,)$ to denote (unit normalized) symmetrization, i.e. $t_{(ab)} = \tfrac{1}{2}(t_{ab} + t_{ba})$ and square brackets $[\, ]$ to denote (unit normalized) antisymmetrization, i.e. $t_{[ab]} = \tfrac{1}{2}(t_{ab} - t_{ba})$. 

In this language, we may express the covariant derivative in terms of partial derivatives $\partial$ and connection coefficients $\Gamma$, so that for a vector field $V \in \Gamma(TM)$,
$$\nabla_a V^b = \partial_a V^b + \Gamma^b{}_{ca} V^c\,.$$ 
Furthermore, the curvature tensor may be expressed as
$$X^a Y^b R_{ab}{}^c{}_d Z^d = X^a Y^b (\nabla_a \nabla_b - \nabla_b \nabla_a) Z^c\,,$$
and the torsion tensor may be expressed as
$$T^c{}_{ab} = -2 \Gamma^c{}_{[ab]}\,.$$

On occasion, we will require derivatives and curvatures on a distinguished hypersurface embedded in $M$. In that case, we will use overbars $\bar{\bullet}$ to denote the quantities intrinsic to that distinguished hypersurface. For example, for an embedding $\Sigma \hookrightarrow (M,g)$, we will denote by $\bar{\nabla}$ a connection (perhaps induced from $\nabla$) on $T \Sigma$. Similarly, $\bar{R}$ will denote the curvature of that connection, and, for example, $\bar{Sc}$ will denote its scalar curvature (if such a quantity is available).

\bigskip

At times it will be helpful to work with non-coordinate bases. In order to do so, we must choose a 1-form, $\nu$ such that $\nu(\ell) = 1$ where $\ell$ is the fundamental vector field.  A basis for $T^*M$, known as a coframe and denoted as $(\theta^\ai)$ , is given by
\begin{equation}
    \theta^1 = \nu, \;\;\theta^{\ii} = m^{\ii}
\end{equation}
\noindent where $m^\ii$ diagonalizes the degenerate metric, i.e., $g = \delta_{\ii\jj} m^\ii m^\jj$. From the coframe we can determine a basis for $TM$, which is known as a frame and is denoted as $(e_\ai)$, by taking $e_1 = \ell$ and requiring that the natural pairing of 1-forms with vector fields satisfies: 
\begin{equation}
    \langle \theta^\ai, e_\bb \rangle = \delta^\ai_{~\bb}. 
\end{equation}
\noindent This is sufficient to determine the remaining frame elements, $e_\ii$, uniquely. We note that since the frame is a non-coordinate basis, the Lie brackets of the frame elements are non-zero:
\begin{equation}
    [e_\ai, e_\bb] = \hat{C}^\cc_{\ai \bb} e_\cc.     
\end{equation} 
With a coframe and frame, the connection can be determined from 
\begin{equation}
    \nabla_\ai e_\bb = \Gamma^\cc_{~\bb \ai} e_\cc \quad \nabla_\ai \theta^\cc = \Gamma^\cc_{~\bb \ai} \theta^\bb. 
\end{equation}
\noindent Denoting the derivative of a function from the manifold to $R^{d}$, $f$ with respect to a frame element $e_A$ as $e_A(f)$, the torsion tensor and the curvature tensor can be defined as 
   \begin{equation}
        \begin{aligned}
            T^\ai_{~\bb\cc} &= -\Gamma^\ai_{~\bb \cc} + \Gamma^\ai_{~\cc \bb} - \hat{C}^\ai_{~\bb \cc}, \\
            R^\ai_{~\bb \cc \dd} &= e_\cc(\Gamma^\ai_{~\bb \dd}) - e_\dd(\Gamma^\ai_{\bb \cc}) + \Gamma^\eee_{~\bb\dd} \Gamma^\ai_{~\eee\cc} - \Gamma^\eee_{~\bb\cc} \Gamma^\ai_{~\eee\dd} - \hat{C}^\eee_{~\cc\dd} \Gamma^A_{~\bb \eee},
        \end{aligned} \label{eqn:GeomTensors}
    \end{equation}

\section{Carrollian Structures, Connections, and Torsions} \label{sec:carollian}

One feature of any \textit{Carrollian manifold} (a Carrollian structure equipped with a compatible affine connection) is that its torsion tensor is heavily constrained by the failure of the fundamental vector field to be Killing. This feature is captured by the following lemma (first proven in a more algebraic setting in~\cite{figueroa2020intrinsic})
which will be critical in the following sections.

\begin{lemma} \label{thm:CarrollianConnection}
    Let $(M,g, \ell)$ be a Carrollian structure equipped with an affine connection $\nabla$ satisfying $\nabla g = \nabla \ell = 0$. Then the torsion tensor, $T$, associated with the connection satisfies: 
    \begin{align} \label{uniqueTorsion}
        (\mathcal{L}_{\ell} g)(X,Y) = g(T(\ell,X), Y)  + g( X, T(\ell,Y)), ~~\forall X,Y \in \Gamma(T M).
    \end{align}
\end{lemma}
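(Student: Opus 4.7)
The plan is to prove this by directly expanding $\mathcal{L}_\ell g$ in two different ways and comparing. The key input is the fact that both $g$ and $\ell$ are parallel for $\nabla$, which will allow us to trade the Lie bracket terms in the Lie derivative for torsion terms.

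First I would write the Lie derivative in the standard form
\[
(\mathcal{L}_\ell g)(X,Y) = \ell\bigl(g(X,Y)\bigr) - g([\ell,X],Y) - g(X,[\ell,Y]).
\]
Since $\nabla g = 0$, the first term can be re-expressed as
\[
\ell\bigl(g(X,Y)\bigr) = (\nabla_\ell g)(X,Y) + g(\nabla_\ell X, Y) + g(X, \nabla_\ell Y) = g(\nabla_\ell X, Y) + g(X, \nabla_\ell Y),
\]
so that
\[
(\mathcal{L}_\ell g)(X,Y) = g(\nabla_\ell X - [\ell,X],\, Y) + g(X,\, \nabla_\ell Y - [\ell,Y]).
\]

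Next I would use the definition of the torsion, $T(\ell,X) = \nabla_\ell X - \nabla_X \ell - [\ell,X]$, rearranged as $\nabla_\ell X - [\ell,X] = \nabla_X \ell + T(\ell,X)$. This is the step where the hypothesis $\nabla \ell = 0$ does the real work: it makes $\nabla_X \ell = 0$ for every $X$, and hence
\[
\nabla_\ell X - [\ell,X] = T(\ell,X), \qquad \nabla_\ell Y - [\ell,Y] = T(\ell,Y).
\]
Substituting these two identities into the previous display immediately yields
\[
(\mathcal{L}_\ell g)(X,Y) = g\bigl(T(\ell,X), Y\bigr) + g\bigl(X, T(\ell,Y)\bigr),
\]
as required.

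There is no serious obstacle here — the argument is a short chain of identities. The only conceptual subtlety worth flagging is the precise role of each compatibility hypothesis: $\nabla g = 0$ is what allows $\ell(g(X,Y))$ to be converted into symmetric covariant-derivative terms, while $\nabla \ell = 0$ is what collapses the torsion identity down to a clean bracket-vs-covariant-derivative relation. Dropping either assumption would leave an extra obstruction (a non-vanishing $\nabla_\ell g$ in the first case, or a $g(\nabla_X \ell, Y) + g(X, \nabla_Y \ell)$ term in the second) that spoils the identity \eqref{uniqueTorsion}.
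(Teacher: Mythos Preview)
Your proof is correct and follows essentially the same approach as the paper: both use $\nabla g = 0$ to relate $\ell(g(X,Y))$ to $g(\nabla_\ell X,Y)+g(X,\nabla_\ell Y)$, then use $\nabla \ell = 0$ in the torsion identity to replace $\nabla_\ell X - [\ell,X]$ by $T(\ell,X)$. The only difference is organizational---the paper starts from $\nabla_\ell(g(X,Y))=0$ and expands toward the Lie derivative, while you start from the Lie derivative formula and substitute---but the content is identical.
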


\begin{proof}
    Since $\nabla g = 0$, for any vector-fields $X$ and $Y$ it follows that
    \begin{equation}
        \nabla_{\ell} (g(X,Y)) = \ell (g(X,Y)) - g (\nabla_{\ell} X, Y) - g(X,\nabla_{\ell} Y)=0. 
    \end{equation}
    Expanding the first term in Lie derivatives then gives
    \begin{equation}
        [\mathcal{L}_{\ell}g](X,Y)+g([\ell, X], Y)+ g(X, [\ell, Y]) - g (\nabla_{\ell} X, Y) - g(X,\nabla_{\ell} Y)=0.
    \end{equation}
    Now, taking $\nabla \ell =0$, the torsion definition gives
    \begin{equation}
        T(\ell, X) = \nabla_{\ell} X - \nabla_X \ell - [\ell, X] = \nabla_{\ell} X - [\ell, X], \nonumber
    \end{equation}
    \noindent for any vector-field $X$. Using this expression gives
    \begin{equation}
        [\mathcal{L}_{\ell}g](X,Y) - g(T(\ell, X), Y) - g(X, T(\ell, Y)) =0. 
    \end{equation}
    As $X$ and $Y$ are arbitrary, the required expression holds.
\end{proof}

That any such connection obeys this particularly simple geometric identity suggests a useful classification of tensors of this type. To that end, we introduce the following definition:
\begin{definition} \label{minimal-torsion-definition}
    Let $(M,g,\ell)$ be a Carrollian structure equipped with an Ehresmann connection $\omega$ such that $\omega(\ell) = 1$. Suppose $T \in \Gamma(TM \otimes \wedge^2 T^* M)$ is a section satisfying
        \begin{align}
        g(T(\ell,X), Y) &= \tfrac{1}{2} (\mathcal{L}_{\ell} g)(X,Y), ~~\forall X,Y \in \Gamma(T M)\,, \label{lie-g-constraint}\\
        \omega(T(\ell,X)) &= (\mathcal{L}_{\ell} \omega)(X) \,, \forall X,Y \in \Gamma(TM) \label{minimal-constraint} \\
        T(U,V) &= 0,~~\forall U,V \in \ker(\omega)\label{horiz-constraint}\,.
    \end{align}
    Then we say that the section $T$ is \textit{minimal}.
\end{definition}

From this definition, we are able to establish that minimal sections are uniquely determined by the tuple $(M,g,\ell,\omega)$:

\begin{lemma} \label{minimal-lemma}
    Let $(M^d,g,\ell)$ be a Carrollian structure equipped with an Ehresmann connection $\omega$ such that $\omega(\ell) = 1$. Then, there exists a unique minimal section $T \in \Gamma(TM \otimes \wedge^2 T^* M)$ associated with the tuple $(M,g,\ell,\omega)$.
\end{lemma}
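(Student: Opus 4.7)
The plan is to exploit the Ehresmann splitting $TM = \langle \ell \rangle \oplus \ker(\omega)$ induced by $\omega$, using it to reduce the determination of $T$ to solving a pointwise non-degenerate linear system in the horizontal subbundle. Since $T$ is antisymmetric in its two lower slots, the third condition \eqref{horiz-constraint} together with the trivial identity $T(\ell,\ell) = 0$ implies that $T$ is completely determined by the values $T(\ell, X)$ as $X$ ranges over $\ker(\omega)$; all other input combinations are fixed by antisymmetry or already set to zero.

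For each $X \in \ker(\omega)$, I would decompose the image along the splitting, writing $T(\ell, X) = a_X \, \ell + v_X$ with $a_X \in \mathbb{R}$ and $v_X \in \ker(\omega)$. Applying $\omega$ to both sides, condition \eqref{minimal-constraint} forces $a_X = (\mathcal{L}_\ell \omega)(X)$. To pin down $v_X$, I would pair condition \eqref{lie-g-constraint} against arbitrary $Y \in \ker(\omega)$: since $g(\ell, Y) = 0$ the $a_X\ell$ piece drops out and the equation reduces to $g(v_X, Y) = \tfrac{1}{2}(\mathcal{L}_\ell g)(X, Y)$. The key observation is that $g|_{\ker(\omega)}$ is non-degenerate: the null space of $g$ is the one-dimensional line $\langle \ell \rangle$, and $\omega(\ell) = 1$ means $\ell \not\in \ker(\omega)$, so $\ker(\omega) \cap \ker(g) = 0$. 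Consequently the bundle map $v \mapsto g(v, \cdot)|_{\ker(\omega)}$ is an isomorphism $\ker(\omega) \to \ker(\omega)^*$, and $v_X$ is uniquely recovered.

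This establishes uniqueness. For existence I would take the above formulas as the definition of $T(\ell, X)$ for $X \in \ker(\omega)$, extend to arbitrary $X \in TM$ by linearity using $T(\ell, \ell) = 0$, declare $T(U, V) = 0$ for $U, V \in \ker(\omega)$, and propagate to all inputs by antisymmetry. It then remains to verify that \eqref{lie-g-constraint} and \eqref{minimal-constraint} hold for arbitrary inputs, not merely those in $\ker(\omega)$. The residual cases—where $Y = \ell$ in \eqref{lie-g-constraint} or $X = \ell$ in \eqref{minimal-constraint}—reduce to the identities $(\mathcal{L}_\ell g)(\cdot, \ell) = 0$ and $(\mathcal{L}_\ell \omega)(\ell) = 0$, both of which follow from $g(\ell, \cdot) = 0$, $\omega(\ell) = 1$, and a short Lie-derivative computation; meanwhile both sides of \eqref{lie-g-constraint} vanish because of $g(\ell, \cdot) = 0$.

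The main obstacle—really the only substantive geometric input—is the non-degeneracy of $g|_{\ker(\omega)}$. Without it, the horizontal component $v_X$ could not be uniquely inverted from condition \eqref{lie-g-constraint}, and both existence and uniqueness would fail; every other step is pure bookkeeping on the splitting of $TM$.
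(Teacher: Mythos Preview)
Your proposal is correct and follows essentially the same approach as the paper: both use the Ehresmann splitting $TM = \langle \ell \rangle \oplus \ker(\omega)$ to decompose $T$ into its vertical and horizontal components and then solve for each using the three constraints, with the non-degeneracy of $g|_{\ker(\omega)}$ (equivalently, $g_{JK} = \delta_{JK}$ in the paper's orthonormal horizontal coframe) being the key point. Your write-up is more thorough than the paper's---you explicitly isolate the non-degeneracy as the crucial input and verify existence by checking the residual cases $X=\ell$ and $Y=\ell$, whereas the paper only sketches uniqueness in frame components---but the underlying argument is the same.
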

\begin{proof}
    We prove by direct computation. First, we fix a frame basis $(\ell,e_{\ii})$ and a coframe basis $(\omega, \theta^\ii)$, where $\omega(e_\ii) = 0$, $\theta^\ii(\ell) = 0$, $\theta^\ii(e_\jj) = \delta^\ii_\jj$, and $g= \delta_{\ii\jj} \theta^\ii \theta^\jj$. Note that $\ai,\bb,\cc \in \{0, 1, \ldots, d\}$ and $\ii,\jj,\kk \in \{1, \ldots, d\}$.

    From Constraints~\ref{horiz-constraint} and~\ref{minimal-constraint},
    it is easy to see that both $T^\ai_{~\ii\jj}$ and $T^0_{~0\bb}$ are fixed, respectively.
    Now restricting Constraint~\ref{lie-g-constraint} to horizontal vectors, we have
    $$\tfrac{1}{2} (\mathcal{L}_{\ell} g)(U,V) = g(T(\ell,U), V) , ~~\forall U,V \in \ker (\omega)\,,$$
    one may see that $T^\kk_{~0\ii} g_{\jj\kk}$ is fixed.
    Thus, all of the components of $T$ are fixed by minimality.
\end{proof}

We now have the language to explicitly define special Carrollian manifolds and potential Carrollian structures:
\begin{definition}
    \label{def:SCM}
    Let $(M,g,\ell)$ be a Carrollian structure equipped with the following:
    \begin{itemize}
        \item A principal 1-form $\nu$ satisfying $\alpha(\nu) = 1$ and
        \item a connection $\nabla$ satisfying $\nabla \ell = 0$, $\nabla g = 0$, $\nabla \nu = 0$, and whose torsion is minimal.
    \end{itemize}
    Then the tuple $(M,g,\ell,\nu,\nabla)$ is a \textit{special Carrollian manifold}.
\end{definition}
\begin{definition} \label{def:PCS}
Let $(M,g,\ell)$ be a Carrollian structure equipped with the following:
\begin{itemize}
    \item A 1-form $\alpha$ satisfying $\alpha(\ell) = 1$ and 
    \item a connection $\nabla$ satisfying $\nabla \ell = 0$, $\nabla g = 0$, and $ \nabla \odot \alpha = g$
    and whose torsion is minimal.
\end{itemize}
Then, the tuple $(M,g,\ell,\alpha,\nabla)$ is a \textit{potential Carroll structure}.
\end{definition}

Having established these specializations of a Carrollian structure, we are now ready to study the conditions required to uniquely specify special Carrollian manifolds (Section~\ref{sec:SCMs}) and potential Carroll structures (Section~\ref{sec:PCSs}). As the geometric structures are similar, it is instructive to first establish a common lemma that will be useful later.

\begin{lemma} \label{nonvanishing-torsion-lemma}
    Let $(M^3,g,\ell)$ be a Carrollian structure equipped with a linear connection $\nabla$ satisfying $\nabla \ell = 0 = \nabla g$.
    Suppose that there exists an Ehresmann connection $\omega$ such that the torsion of $\nabla$ is minimal with respect to the tuple $(M,g,\ell,\omega)$. Assuming the torsion is non-vanishing, there exists a symmetric tensor $N$ such that
    $$\nabla_{(a} \omega_{b)} = N_{ab}\qquad \text{ where } \qquad N(\ell, \cdot) = 0$$
    if and only if one of the following conditions holds:
        \begin{itemize}
        \item for $V := Trace(T)$,  $V(\ell) \neq 0$, 
        $$\nabla_{(a}\gamma_{b)} = N_{ab}, \qquad \gamma_b = \frac{V_b-\mathcal{L}_\ell \omega_b}{V(\ell)} \,, $$
        \item $V(\ell) = 0$  
        $$-\nabla_d T_{abc} = \nabla_d (\mathcal{L}_\ell g_{a[b}) \omega_{c]} + \mathcal{L}_\ell g_{a[b} N_{c]d}\,. $$
    \end{itemize}
\end{lemma}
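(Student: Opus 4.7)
The plan is to reduce the condition $\nabla_{(a}\omega_{b)} = N_{ab}$ with $N(\ell,\cdot) = 0$ to identities involving the torsion and its trace $V$, and then exploit the scalar $V(\ell)$ as the natural invariant distinguishing the two cases.

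First I would unpack the constraint $N(\ell,\cdot) = 0$. Since $\nabla\ell = 0$ and $\omega(\ell) = 1$ imply $\ell^a\nabla_b\omega_a = \nabla_b(\omega(\ell)) = 0$, the $\ell$-contraction of $N_{ab}$ collapses to $\tfrac{1}{2}\ell^a\nabla_a\omega_b = (\nabla_\ell \omega)_b$. Unfolding the Lie derivative with respect to the connection gives $(\mathcal{L}_\ell \omega)_b = (\nabla_\ell \omega)_b + \omega_c T^c{}_{ab}\ell^a$; invoking the minimality constraint~\eqref{minimal-constraint}, $\omega_c T^c{}_{ab}\ell^a = (\mathcal{L}_\ell \omega)_b$, produces a tautology from which $\nabla_\ell \omega = 0$ follows. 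Thus the condition $N(\ell,\cdot) = 0$ is essentially algebraic, and the content of the lemma lies in identifying which tensor $N$ arises from the remaining data.

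Next I would trace the minimality relation to expose $V_b$ and $V(\ell)$ as the natural scalar controlling invertibility of the resulting linear system. When $V(\ell) \neq 0$, I would directly verify that $\gamma_b := (V_b - (\mathcal{L}_\ell \omega)_b)/V(\ell)$ is an Ehresmann connection: antisymmetry of $T$ in its lower indices gives $(\mathcal{L}_\ell \omega)(\ell) = 0$, so $\gamma(\ell) = V(\ell)/V(\ell) = 1$. A short calculation using~\eqref{lie-g-constraint} together with $\nabla g = \nabla \ell = 0$ then shows $\nabla_{(a}\gamma_{b)}$ equals the required $N_{ab}$ and inherits the vanishing $\ell$-contraction.

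When $V(\ell) = 0$, the denominator in $\gamma$ vanishes and this construction breaks down. In this case I would instead apply $\nabla_d$ to the first minimality constraint $g_{a(b}T^a{}_{c)d}\ell^d = \tfrac{1}{2}(\mathcal{L}_\ell g)_{bc}$ and commute $\nabla_d$ past $g$ and $\ell$ using $\nabla g = \nabla \ell = 0$. Antisymmetrizing over the pairs $(b,c)$ and invoking the horizontality constraint~\eqref{horiz-constraint} to kill the purely spatial pieces, one is left with a three-index identity that, after rearrangement, reads precisely
\[
-\nabla_d T_{abc} = \nabla_d(\mathcal{L}_\ell g_{a[b})\omega_{c]} + \mathcal{L}_\ell g_{a[b} N_{c]d}.
\]
The dimension $\dim M = 3$ enters through the three-index antisymmetrization, which closes the system without generating new constraints, and the non-vanishing torsion hypothesis guarantees the relation is non-trivial.

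The main obstacle will be the $V(\ell) = 0$ case. The explicit construction of $\gamma$ is no longer available, so the integrability condition must be extracted intrinsically as a compatibility between $\nabla T$, $\nabla(\mathcal{L}_\ell g)$, $\omega$, and $N$; carefully controlling the index (anti)symmetrizations and confirming that the first Bianchi identity for a connection with torsion does not furnish any further obstructions is the technical heart of the argument.
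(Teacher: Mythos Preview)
Your outline misses the crucial identity that actually drives both cases: in three dimensions, minimality forces
\[
V_b - (\mathcal{L}_\ell\omega)_b \;=\; V(\ell)\,\omega_b,
\]
so that $\gamma = \omega$ identically. The paper obtains this by an explicit frame computation (choosing coordinates with $\ell=\partial_u$, an upper--triangular coframe for $g$, and writing out all connection coefficients and torsion components). Once $\gamma=\omega$, the first bullet is tautologically equivalent to $\nabla_{(a}\omega_{b)}=N_{ab}$. Your sketch only checks $\gamma(\ell)=1$ and then asserts that ``a short calculation'' gives $\nabla_{(a}\gamma_{b)}=N_{ab}$; but $N$ here is a \emph{prescribed} tensor (the lemma is later applied with $N=0$ and $N=g$), so you are being asked to match $\nabla_{(a}\gamma_{b)}$ to $\nabla_{(a}\omega_{b)}$, and nothing short of $\gamma=\omega$ will do that. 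The constraints~\eqref{lie-g-constraint} and $\nabla g=\nabla\ell=0$ by themselves do not pin down $\nabla_{(a}\gamma_{b)}$ without this identity.

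The $V(\ell)=0$ branch has the same defect. The paper's argument rests on the explicit factorisation $T_{abc}=(\mathcal{L}_\ell g_{a[b})\,\omega_{c]}$ of the \emph{lowered} torsion, again verified in the frame; differentiating this and reading off the unfixed $\Gamma^1{}_{IJ}$ produces the displayed condition. Your plan of differentiating~\eqref{lie-g-constraint} and antisymmetrising does not obviously yield that three--index factorisation, and the appeal to ``horizontality~\eqref{horiz-constraint} killing purely spatial pieces'' and to the first Bianchi identity is too vague to constitute a proof. In short, both branches hinge on structural identities for the minimal torsion in dimension three that you have not established; the paper obtains them by direct computation in an adapted frame, and an abstract argument would still have to reproduce exactly those identities.
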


\begin{proof}
    Choosing coordinates $(u,x^1,x^2)$ where $\ell = \partial_u$, without loss of generality, we may write any such Ehresmann connection as
    \begin{equation} 
        \omega = du + \omega_i (u,x,y) dx^i,\quad i =1,2. 
    \end{equation} 
    As the degenerate metric can be treated as a Riemannian signature metric on a 2-manifold, it is always possible to diagonalize this with an upper-triangular coframe:
     \begin{equation} \begin{aligned}
       m^1 &= m^1_{~1}(u, x^i) dx^1\,, \\
       m^2 &= m^2_{~1}(u,x^i) dx^1 + m^2_{~2}(u,x^i) dx^2.
    \end{aligned} \end{equation}
    Choosing $(\theta^\ai) = (\omega, m^1, m^2)$ as a coframe, the dual frame $(e_\ai)$ is then 
    \begin{equation} \begin{aligned}
        e_1 &= \ell = \partial_u, \\
        e_2 &= \frac{\omega_2 m^2_{~1} - \omega_1 m^2_{~2}}{m^1_{~1} m^2_{~2}} \partial_u + \frac{1}{m^1_{~1}} \partial_{x^1} - \frac{m^2_{~1}}{m^1_{~1} m^2_{~2}} \partial_{x^2}, \\
        e_3 &= - \frac{\omega_2}{m^2_{~2}} \partial_{u} + \frac{1}{m^2_{~2}} \partial_{x^1} .\\ 
    \end{aligned} \end{equation}
    In the frame basis, we can constrain the connection coefficients $\Gamma^\ai_{~\bb\cc}$ for $\nabla$ using the hypothesis. The conditions $\nabla \ell = \nabla g  =0$ yield purely algebraic expressions:
    \begin{equation}
    \begin{aligned}
        & \nabla \ell = 0 \rightarrow  \Gamma^{1}_{~1\ai}= \Gamma^{2}_{~1\ai}=\Gamma^{3}_{~1\ai} = 0, \quad \ai = 1,2,3, \\
        &\nabla g= 0 \rightarrow \Gamma^{2}_{~2\ai}=\Gamma^{3}_{~3\ai}=0, \Gamma^{3}_{~21} = -\Gamma^{2}_{~31}, \Gamma^{3}_{~22} = -\Gamma^{2}_{~32},\Gamma^{3}_{~23} = -\Gamma^{2}_{~33} 
    \end{aligned}
    \end{equation}
    giving
    \begin{equation} \begin{aligned}
        \nabla e_1 =& 0, \\
        \nabla e_2 =& \Gamma^1_{~21} e_1 \otimes \theta^1 + \Gamma^1_{~22} e_1 \otimes \theta^2 + \Gamma^1_{~23} e_1 \otimes \theta^3 \\ 
        & -\Gamma^2_{31} e_3 \otimes \theta^1 - \Gamma^2_{~32} e_3 \otimes \theta^2 - \Gamma^2_{~33} e_3 \otimes \theta^3 \\
        \nabla e_3 =& \Gamma^1_{~31} e_1 \otimes \theta^1 + \Gamma^1_{~32} e_1 \otimes \theta^2 + \Gamma^1_{~33} e_1 \otimes \theta^3  \\ 
        &+ \Gamma^2_{31} e_2 \otimes \theta^1 + \Gamma^2_{~32} e_2 \otimes \theta^2 + \Gamma^2_{~33} e_2 \otimes \theta^3.
    \end{aligned} \label{eqn:Thm32a} \end{equation}
    \noindent The covariant derivative of the Ehresmann connection is 
    \begin{equation} \begin{aligned}
        \nabla \omega = \nabla \theta^1 =& -\Gamma^1_{21} \theta^2 \otimes \theta^1 - \Gamma^1_{22} \theta^2 \otimes \theta^2 - \Gamma^1_{23} \theta^2 \otimes \theta^3 \\ 
        &- \Gamma^1_{31} \theta^3 \otimes \theta^1 - \Gamma^1_{32} \theta^3 \otimes \theta^2 - \Gamma^1_{33} \theta^3 \otimes \theta^3.  
    \end{aligned} \label{eqn:Thm32b} \end{equation}
    
    By hypothesis, $\omega$ is such that the torsion of $\nabla$ is minimal with respect to $(M,g,\ell,\omega)$. According to Definition~\ref{minimal-torsion-definition}, we may further determine $\Gamma$, using
    \begin{equation} \label{lie-derivs}
    \begin{aligned}
        \mathcal{L}_{\ell}g &= (\ln m^1_{~1})_{,u} \theta^2 \theta^2 + (\ln m^2_{~2})_{,u} \theta^3 \theta^3 + \\
        &\quad \quad 2 \left(\frac{(m^2_{~1})_{,u} m^2_{~2} - (m^2_{~2})_{,u} m^2_{~1}}{m^1_{~1} m^2_{~2}} \right) \theta^2 \theta^3, \\
        \mathcal{L}_\ell \omega &= \frac{\omega_{1,u} m^2_{~2} - \omega_{2,u} m^2_{~1}}{m^1_{~1} m^2_{~2}} \theta^2 + \frac{\omega_{2,u}}{m^2_{~2}} \theta^3.
    \end{aligned}
    \end{equation}
    Computing $T^\ai_{~\bb \cc}\ell^\cc = T(\ell, \cdot)$ using Cartan's first structure equation, we find
    \begin{equation}
    \begin{aligned}
        T(\ell,\cdot) &= \left( \Gamma^1_ {~21} + \frac{ \omega_{1,u} m^2_{~2}-\omega_{2,u}m^2_{~1}}{m^1_{~1} m^2_{~2}} \right) e_1 \otimes \theta^2 + \left(  \Gamma^1_ {~31} +\frac{\omega_{2,u}}{m^2_{~2}} \right) e_1 \otimes \theta^3 \\
        &\quad \quad  (\ln m^1_{~1})_{,u} e_2 \otimes \theta^2 + (\ln m^2_{~2})_{,u} e_3 \otimes \theta^3 + \Gamma^2_{~31} e_2 \otimes \theta^3 \\
        &\quad \quad -\left( \Gamma^2_{~31} +\frac{(m^2_{~1})_{,u} m^2_{~2} - (m^2_{~2})_{,u} m^2_{~1}}{m^1_{~1} m^2_{~2}} \right) e_2 \otimes \theta^3.
    \end{aligned}
    \end{equation}
    Combining the above with Definition~\ref{minimal-torsion-definition},
    we have the following expressions for connection coefficients: 
    \begin{equation}
    \begin{aligned}
        \Gamma^1_{~21} =& 0 \\ 
        \Gamma^1_{~31} =& 0, \\
         \Gamma^2_ {~31} =& -\left(\frac{m^2_{~2,u} m^2_{~1}- m^2_{~1,u} m^2_{~2} }{m^1_{~1} m^2_{~2}}\right), \\
         \Gamma^1_{~23} =& \Gamma^1_{~32} + \frac{ -\omega_{2,u} \omega_1 + \omega_{1,u} \omega_2  + \omega_{2,1}-\omega_{1,2}}{m^1_{~1} m^2_{~2}}, \\
         \Gamma^2_{~32} = & \frac{(m^1_{~1})_{,u} \omega_2 - (m^1_{~1})_{,2}}{m^1_{~1} m^2_{~2}}, \\
       \Gamma^2_{~33} = & - \left(\frac{(m^2_{~1})_{,u} \omega_2 - (m^2_{~2})_{,u} \omega_1 + (m^2_{~2})_{,1}-(m^2_{~1})_{,2}}{m^1_{~1} m^2_{~2}} \right).
    \end{aligned} \label{eqn:Thm32c}
    \end{equation}
    
    In light of this, to prove the first two conditions in the theorem, we express the torsion tensor explicitly: 
        \begin{equation}
    \begin{aligned}
        {\bf T} &= 2 \left( \frac{ \omega_{1,u} m^2_{~2}-\omega_{2,u}m^2_{~1}}{m^1_{~1} m^2_{~2}} \right) e_1 \otimes \theta^1 \wedge \theta^2 + 2\left( \frac{\omega_{2,u}}{m^2_{~2}} \right) e_1 \otimes \theta^1 \wedge \theta^3 \\ 
        &\quad + 2(\ln m^1_{~1})_{,u} e_2 \otimes \theta^1 \wedge \theta^2 + 2(\ln m^2_{~2})_{,u} e_3\otimes \theta^1  \wedge \theta^3  \\
        &\quad  + \quad 4 \left(\frac{(m^2_{~1})_{,u} m^2_{~2} - (m^2_{~2})_{,u} m^2_{~1}}{m^1_{~1} m^2_{~2}} \right) (e_3 \otimes \theta^1 \wedge \theta^2 + e_2 \otimes \theta^1 \wedge \theta^3).
    \end{aligned}
    \end{equation}

\medskip

    Now first suppose that the trace of the torsion tensor is non-vanishing. Then, taking its trace we find
    \begin{equation}
    \begin{aligned}
            V_\ai :=&  T^\bb_{~\bb \ai} = \left( \frac{ \omega_{1,u} m^2_{~2}-\omega_{2,u}m^2_{~1}}{m^1_{~1} m^2_{~2}} \right) \theta^2 + \left( \frac{\omega_{2,u}}{m^2_{~2}} \right) \theta^3 + (\ln m^1_{~1} m^2_{~2} )_{,u} \theta^1.
    \end{aligned}
    \end{equation}
    Comparing with $\mathcal{L}_{\ell} \omega$, we find that
    $$V_\ai - \mathcal{L}_{\ell} \omega_\ai = (\ln m^1_1 m^2_2)_{,u} \omega\,.$$
    Now suppose that $V(\ell) \neq 0$. Then, it follows that 
    \begin{equation}
        \tfrac{1}{V(\ell)} (V - \mathcal{L}_\ell \omega) = \omega\,.
    \end{equation}
    \noindent It thus follows that if
    $$\nabla_{(\ai} \gamma_{\bb)} =: N_{\ai \bb}\,,$$
    where
    $\gamma := \tfrac{1}{V(\ell)} (V - \mathcal{L}_\ell \omega)$, then $\nabla \odot \omega = N$, as required.

    On the other hand,
    if the trace of the torsion tensor is horizontal, meaning $V(\ell) = 0$, so that $V = \mathcal{L}_\ell \omega$, or the torsion tensor is trace-free, we may use the fact that the torsion tensor with all indices lowered can be written as 
    \begin{equation}
        T_{\ai \bb \cc} = (\mathcal{L}_\ell g_{\ai[\bb}) \omega_{\cc]}
    \end{equation}
    \noindent Taking the covariant derivative of this tensor and imposing the second condition in the theorem statement  yields $$\Gamma^1_{~22} = N_{22}, \quad \Gamma^1_{~33} = N_{33} \text{ and } \Gamma^1_{~32} = N_{23} +  \frac{\omega_{2,u} \omega_1- \omega_{1,u} \omega_2 - \omega_{2,x}+\omega_{1,y}}{2 m^1_{~1} m^2_{~2}}$$ from which it follows that $ \nabla \odot\omega = N$.

    In either case, it is easy to check by way of Equation~\ref{eqn:Thm32b} that $N$ is horizontal, meaning that $N(\ell,\cdot) = 0$.
    
    Proving the opposite direction is straightforward.

    \end{proof}

    \begin{remark}
        Note that Lemma~\ref{nonvanishing-torsion-lemma} may be generalized to higher (fixed) dimensions in a straightforward way, however the computations quickly become unwieldy.
    \end{remark}

    We are now equipped to study the conditions required to produce special Carrollian manifolds and potential Carroll structures.

\section{Minimal features of special Carrollian manifolds}\label{sec:SCMs}

Special Carrollian manifolds first appeared (implicitly) in the literature in~\cite{bekaert2018connections}; there, a larger class of manifold structures were specified in the form of a principal Ehresmann connection and what was called there a torsional special Carrollian connection, constructed in such a way that the Ehresmann connection is parallel. Indeed, in that work, it was shown that when given any principal Ehresmann connection, a torsional special Carrollian connection is determined uniquely if the connection is constructed in such a way that the only torsion present arises from the Lie derivative of the metric---hence, the name ``minimal" torsion. However, the reverse implication is not provided in their work, nor is it trivially obtained from it.

\subsection{Principal 1-form implies connection} \label{sec:SCM-1form-con}
The minimal data that one may in principle specify for a Carrollian structure is an Ehresmann connection. However, it is clear from~\cite{bekaert2018connections} that this is insufficient to obtain a unique affine connection. However, given any such Ehresmann connection, one may transform it into a principal Ehresmann connection using Carrollian boosts. This is captured in the following lemma.
\begin{lemma}
Let $(M,g, \ell)$ be a Carrollian structure equipped with an Ehresmann connection, $\nu'$. Then $\nu'$ can always be transformed via a Carrollian boost $\nu' \to \nu$  such that $\nu$ is principal, i.e., $\mathcal{L}_{\ell} \nu = 0$.
\end{lemma}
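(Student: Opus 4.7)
The plan is to recognize a Carrollian boost as the gauge transformation $\nu' \mapsto \nu = \nu' - \beta$ by an arbitrary horizontal 1-form $\beta$ (that is, $\beta(\ell)=0$). This is the only freedom that preserves the defining condition $\nu(\ell)=1$, so the task reduces to producing a horizontal $\beta$ satisfying $\mathcal{L}_\ell \beta = \mathcal{L}_\ell \nu'$, since then $\mathcal{L}_\ell \nu = \mathcal{L}_\ell \nu' - \mathcal{L}_\ell \beta = 0$.

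The first step is a compatibility check: one verifies that $\mathcal{L}_\ell \nu'$ is automatically horizontal. Indeed, using Cartan's formula or the standard identity,
\begin{equation*}
(\mathcal{L}_\ell \nu')(\ell) = \ell\bigl(\nu'(\ell)\bigr) - \nu'([\ell,\ell]) = \ell(1) = 0,
\end{equation*}
so the equation $\mathcal{L}_\ell \beta = \mathcal{L}_\ell \nu'$ is at least consistent with horizontality.

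Next, I would pass to coordinates adapted to $\ell$. Since $\ell$ is nowhere vanishing, locally there exist coordinates $(u,x^1,\dots,x^{d-1})$ with $\ell = \partial_u$. In these coordinates any 1-form satisfying $\nu'(\ell)=1$ reads $\nu' = du + \nu'_i(u,x)\,dx^i$, and any horizontal $\beta$ reads $\beta = \beta_i(u,x)\,dx^i$. The desired equation then collapses to the scalar ODEs $\partial_u \beta_i = \partial_u \nu'_i$, which integrate trivially to
\begin{equation*}
\beta_i(u,x) := \nu'_i(u,x) - \nu'_i(u_0, x),
\end{equation*}
for any chosen slice $\{u=u_0\}$. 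The resulting $\nu = \nu' - \beta = du + \nu'_i(u_0, x)\,dx^i$ is manifestly $u$-independent, hence $\mathcal{L}_\ell \nu = 0$, and $\nu(\ell)=1$ by construction.

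The only obstacle is global in nature: the argument constructs the boost within a flow box for $\ell$, so the statement is intrinsically local unless the orbits of $\ell$ are well-behaved (e.g.\ $\ell$ is complete and the leaf space is a manifold). For the Carrollian settings considered in this paper, where $\ell$ is the fundamental vector field of a null hypersurface and working locally is standard, this suffices; otherwise one would need to cover $M$ by flow boxes, choose compatible slices, and patch the horizontal shifts together with a partition of unity subordinate to the cover, noting that the space of horizontal 1-forms is an affine bundle so such patching preserves the relevant constraints.
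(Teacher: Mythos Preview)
Your proof is correct and follows essentially the same approach as the paper's: both pass to coordinates adapted to $\ell$, write $\nu' = du + \nu'_i\,dx^i$, and shift by a horizontal 1-form to kill the $u$-dependence of the spatial components. Your version is in fact more explicit and careful than the paper's---you give the exact boost parameter $\beta_i(u,x) = \nu'_i(u,x) - \nu'_i(u_0,x)$, verify the horizontality compatibility, and flag the local-versus-global distinction, none of which the paper spells out.
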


\begin{proof}
    We will choose a coordinate system $(t, x^i)$, $i,j,k \in \{ 1,\ldots, n\}$ where $\ell = \partial_t$. The Ehresmann connection can be expressed as
    \begin{equation}
        \nu' = dt + \alpha_i(t, x^j) dx^i. \nonumber
    \end{equation}
    Then if $\mathcal{L}_{\ell} \nu' \neq 0$ this is equivalent to 
    \begin{equation}
        \partial_t \alpha_i \neq 0.
    \end{equation}
    Using a Carrollian boost with parameters $B_i = -\alpha_i(t,x^j) + b_i(t,x^j)$ the new Ehresmann connection $$ \nu = dt + b_i(t,x^j) dx^i$$ is now principal. 
\end{proof}

Having established that any Carrollian structure may be equipped with a principal Ehresmann connection, we may now establish a result equivalent to that of~\cite[Proposition A.21]{bekaert2018connections}.

\begin{theorem} \label{thm:preBM}
    Let $(M, g, \ell)$ be a Carrollian structure equipped with a principal Ehresmann connection, $\nu$ such that $\nu(\ell)=1$. Then, there exists a unique torsionful connection $\nabla$ such that $(M,g,\ell,\nu,\nabla)$ is a special Carrollian manifold.
\end{theorem}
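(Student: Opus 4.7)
The plan is to proceed by direct computation in an adapted frame, paralleling the setup used in the proof of Lemma~\ref{nonvanishing-torsion-lemma}. Since $\nu$ satisfies $\nu(\ell) = 1$, one takes $\theta^1 = \nu$ as part of a coframe $(\nu, m^i)$ where the $m^i$ diagonalize the degenerate metric $g$; letting $(\ell, e_i)$ be the dual frame, the task reduces to determining the connection coefficients $\Gamma^A_{BC}$ in this basis.

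First, the three covariant-constancy conditions are encoded as algebraic constraints on the $\Gamma^A_{BC}$: as in Lemma~\ref{nonvanishing-torsion-lemma}, $\nabla \ell = 0$ forces $\Gamma^A_{1C} = 0$ for all $A, C$, while $\nabla g = 0$ yields the usual orthogonality relations $\Gamma^j_{iC} = -\Gamma^i_{jC}$ and vanishing of the diagonal spatial entries. The new condition $\nabla \nu = 0$ additionally forces $\Gamma^1_{BC} = 0$ for all $B, C$. Together these purely algebraic conditions reduce the undetermined coefficients to the skew-symmetric spatial family; in three dimensions, to the three coefficients $\Gamma^2_{3C}$ for $C = 1, 2, 3$.

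Next, minimality of the torsion is invoked to fix what remains. Expanding $T(\ell, \cdot)$ in the adapted frame and applying conditions~\eqref{lie-g-constraint}--\eqref{horiz-constraint} of Definition~\ref{minimal-torsion-definition} gives a determined linear system for the $\Gamma^2_{3C}$. The essential compatibility check comes from condition~\eqref{minimal-constraint}, namely $\nu(T(\ell, X)) = (\mathcal{L}_\ell \nu)(X)$: its left-hand side vanishes because all $\Gamma^1_{BC} = 0$, and its right-hand side vanishes precisely because $\nu$ is principal. This is the step at which the principal hypothesis enters essentially, ensuring that the full system of conditions is not overdetermined at the $\ell$-transverse level.

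Uniqueness then follows by combining Lemma~\ref{minimal-lemma}, which pins down the minimal torsion section in terms of $(M, g, \ell, \nu)$, with the algebraic determinacy of the covariant-constancy conditions. Existence is a direct substitution check: the connection constructed above must satisfy all four clauses of Definition~\ref{def:SCM}. The main obstacle I anticipate is the horizontal torsion condition $T(U, V) = 0$ for $U, V \in \ker(\nu)$: once the $\Gamma^1_{BC}$ are killed by $\nabla \nu = 0$, the vanishing of $T^1_{ij}$ reduces to a condition on the frame structure functions $\hat{C}^1_{ij}$, which are tied to $d\nu$ evaluated on horizontal vectors. Carefully tracking how the principal assumption, together with the remaining freedom in the spatial coefficients $\Gamma^2_{3C}$, yields a consistent and uniquely determined connection will be the delicate part of the argument.
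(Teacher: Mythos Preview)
Your route differs from the paper's in two respects. You work in an adapted non-coordinate frame and impose the parallelism conditions $\nabla\ell=\nabla g=\nabla\nu=0$ \emph{first}, as algebraic constraints on the $\Gamma^A_{~BC}$, invoking minimality only afterward to pin down the residual $\Gamma^2_{~3C}$. The paper instead works in coordinates $(t,x^i)$ with $\ell=\partial_t$ and reverses the order: it first invokes Lemma~\ref{minimal-lemma} to fix the minimal torsion (hence $\Gamma^c_{[ab]}$) outright, and then solves the parallelism equations only for the symmetric parts $\Gamma^c_{(ab)}$, checking a single $\ell$-transverse compatibility at the end. The paper's ordering is dimension-agnostic and makes uniqueness immediate, whereas your argument is written specifically for $d=3$.

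There is a genuine gap in your proposed resolution of the obstacle you flag. Once $\nabla\nu=0$ has forced $\Gamma^1_{~BC}=0$ in the frame, the horizontal minimality condition $T^A_{~ij}=0$ reduces, for $A=1$, to the bare structure-function equation $\hat C^1_{~ij}=0$, equivalently $(d\nu)(e_i,e_j)=0$. Neither of your two candidate tools can touch this: the principal hypothesis $\mathcal L_\ell\nu=0$ controls only $d\nu(\ell,\cdot)$, not the horizontal part of $d\nu$; and the spatial coefficients $\Gamma^2_{~3C}$ simply do not enter $T^1_{~ij}$. So in your ordering the system is overdetermined whenever $\ker\nu$ is non-integrable. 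The paper's coordinate proof does not meet this obstruction head-on because it only writes out and verifies the \emph{symmetric} part $\nabla_{(a}\nu_{b)}=0$; the antisymmetric part is not separately checked there. If you want to salvage the frame approach, mirror the paper's order---fix the minimal torsion first via Lemma~\ref{minimal-lemma}, then solve for the symmetric coefficients---rather than killing all $\Gamma^1_{~BC}$ at the outset.
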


\begin{proof}
    We work in coordinates $x^a = (t,x^i)$ such that $\ell = \partial_t$, and we will prove the theorem by establishing that in these coordinates, there exists unique Christoffel symbols that satisfy the hypothesis. Note that in these coordinates, the principal Ehresmann connection may be expressed as
    $$\nu = dt + \beta_i(t,x^j) dx^i\,.$$

    As we require that the torsion is minimal (per Definition~\ref{def:SCM}), which according to Lemma~\ref{minimal-lemma} makes it uniquely specified, we need only check that the symmetric parts of the Christoffel symbols $\Gamma^a_{(bc)}$ are uniquely fixed by the other constraints making a Carrollian structure a special Carrollian manifold: that $\nabla \ell = \nabla g = \nabla \nu = 0$ and the torsion of $\nabla$ is minimal.

    In these coordinates, the condition $\nabla \ell = 0$ is equivalent to
    \begin{align} \label{first-connect-SCM}
        \Gamma^a_{bt} = 0\,,
    \end{align}
    which implicitly fixes $\Gamma^a_{(bt)}$.

Now consider the purely ``spatial" part of the covariant derivative of the metric:
\begin{align*} \nabla_{i}g_{jk} & = \partial_i g_{jk} - \Gamma^a_{ij}g_{ak} - \Gamma^a_{ik}g_{aj} \\
&= \partial_i g_{jk} - \Gamma^l_{ij}g_{lk} - \Gamma^l_{ik}g_{lj},
\end{align*}
where in the second line we again used $g_{tb} = 0$. Now $\nabla_i g_{jk} = 0$ is identical to the compatibility condition between the \emph{invertible} metric $g_{ij}$ and the purely `spatial' connection whose Christoffel symbols are $\Gamma^i_{jk}$. Hence, standard arguments of Riemannian geometry dictate that $\nabla_i g_{jk} = 0$ if and only if
\begin{equation} \label{Levi-Civita-SCM}
\Gamma^i_{(jk)} = \frac{1}{2}g^{lj}(\partial_i g_{kl} + \partial_k g_{li} - \partial_l g_{ik}).
\end{equation}
This fixes the purely horizontal component of the connection.

Finally, from the condition that $\nabla \nu = 0$, we have 
\begin{align} 
\partial_{(a}\nu_{b)} - \Gamma^c_{(ab)}\nu_c  = 0 \label{last-connectEq1-SCM} \\
\implies \Gamma^t_{(ab)}\nu_t + \Gamma^i_{(ab)}\nu_i  = \partial_{(a}\nu_{b)}  \nonumber \\
\implies  \Gamma^t_{(ab)}  = - \Gamma^i_{(ab)}\nu_i + \partial_{(a}\nu_{b)}, \label{last connect-SCM}
\end{align}
where in the last line we used the fact that $1 = \nu_a\ell^a = \nu_t$. Note that Equation~\ref{first-connect-SCM} and Equation~\ref{Levi-Civita-SCM} ensure that $\Gamma^i_{(ab)}$ is fixed in Equation~\ref{last connect-SCM}, and so $\Gamma^t_{(ab)}$ is determined by Equation~\ref{last connect-SCM}.

All that remains to check is that Equation~\ref{last connect-SCM} is compatible with the parallel vector condition $\Gamma^a_{bt} = 0$. By setting $b = t$ in Equation~\ref{last-connectEq1-SCM} and using the facts that $\nu_t = 1$, $\Gamma^c_{at} = 0$, and $g_{at} = 0$, we find that
$$\partial_t \nu_a = \Gamma^c_{ta} \nu_c\,.$$
Using that 
$\Gamma^c_{at} = 0$, we have that $\Gamma^c_{ta} = T^c_{ta} = \ell^b T^c_{ba}$, so
$$\partial_t \nu_a = \ell^b T^c_{ba} \nu_c\,.$$
But because $\nu$ is principal (meaning $\mathcal{L}_{\ell} \nu = 0$), it follows that $\partial_t \nu_a = 0$. So the above expression reduces to $\ell^b T^c_{ba} \nu_c = 0$, which is consistent with Constraint~\ref{minimal-constraint}, which we have already assumed holds by hypothesis.

So Equations~\ref{first-connect-SCM},~\ref{Levi-Civita-SCM}, and~\ref{last connect-SCM}, together with the condition that the torsion is minimal, uniquely specify the connection and are mutually consistent. It follows that $(M,g,\ell,\nu,\nabla)$ is a special Carrollian manifold.

\end{proof}

\subsection{Connection implies 1-form} \label{sec:SCM-con-1form}

As connections have far more degrees of freedom than 1-forms, it is not surprising that a unique compatible connection for a given principal Ehresmann connection can be constructed as in the previous subsection by imposing geometric constraints. On the other hand, the converse is not, in general, true. Rather, strong geometric restrictions must be placed on the connection to ensure that such a parallel Ehresmann connection exists. This is captured by the following theorem.

\begin{theorem}
\label{Thm:BMcharacterization}
    Let $(M^3,g,\ell)$ be a Carrollian structure equipped with a linear connection $\nabla$ satisfying $\nabla \ell = 0 = \nabla g$.
    Suppose that there exists an Ehresmann connection $\nu$ such that the torsion of $\nabla$ is minimal with respect to the tuple $(M,g,\ell,\nu)$. Then, $(M,g,\ell,\nu,\nabla)$ is a special Carrollian manifold if and only if one of the following identities holds for $X, Y, Z \in \Gamma(TM)$:
        \begin{itemize}
        \item $\nu(T(\ell, X)) = 0$, and $V = Trace(T) \neq 0$,
        $$\nabla_{(a}\gamma_{b)} = 0, \qquad \gamma_b = \frac{V_b}{V(\ell)} $$
        \item $\nu(T(\ell, X)) = 0$, and $V = 0$, 
        $$-\nabla_d T_{abc} = \nabla_d (\mathcal{L}_\ell g_{a[b}) \nu_{c]}  $$
        \item $T = 0$, $\nu(R(X,Y) Z ) = 0$, and 
        $$ (\nabla_e R^a_{~bcd}) \nu_a  = 0.$$
    \end{itemize}
\end{theorem}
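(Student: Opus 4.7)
The plan is to reduce both directions of the equivalence to Lemma~\ref{nonvanishing-torsion-lemma} in the non-vanishing-torsion cases and to the Ricci identity in the torsion-free case, with the three bullets capturing exactly the conditions under which both the symmetric and antisymmetric parts of $\nabla\nu$ vanish.

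For the forward direction, assume $(M,g,\ell,\nu,\nabla)$ is a special Carrollian manifold, so $\nabla\nu=0$. Contracting with $\ell^a$ and using $\nabla\ell=0$ gives $\mathcal{L}_\ell\nu=0$, and the minimal constraint $\nu(T(\ell,X))=(\mathcal{L}_\ell\nu)(X)$ forces $\nu(T(\ell,X))=0$. The identity $V-\mathcal{L}_\ell\nu=(\ln m^1{}_1 m^2{}_2)_{,u}\,\nu$ established in the proof of Lemma~\ref{nonvanishing-torsion-lemma} shows that under $\mathcal{L}_\ell\nu=0$ the trace one-form $V$ is proportional to $\nu$, so $V\neq 0\iff V(\ell)\neq 0$, and the three bullets exhaust the possibilities. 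In bullet 1, the choice $\gamma=V/V(\ell)$ reduces to $\nu$, so $\nabla_{(a}\gamma_{b)}=0$ is just the symmetric part of $\nabla\nu=0$; bullet 2 follows similarly from the $V(\ell)=0$ case of Lemma~\ref{nonvanishing-torsion-lemma} with $N=0$. For bullet 3, with $T=0$, the Ricci identity applied to $\nabla\nu=0$ gives $R_{ab}{}^c{}_d\nu_c=0$, and differentiating using $\nabla\nu=0$ once more yields $(\nabla_eR_{ab}{}^c{}_d)\nu_c=0$.

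For the reverse direction with non-vanishing torsion (bullets 1 and 2), the hypothesis $\nu(T(\ell,X))=0$ combined with the minimal constraint again yields $\mathcal{L}_\ell\nu=0$, and Lemma~\ref{nonvanishing-torsion-lemma} applied with $N=0$ produces $\nabla_{(a}\nu_{b)}=0$. The antisymmetric part follows from the identity $2\nabla_{[a}\nu_{b]}=(d\nu)_{ab}-T^c_{ab}\nu_c$: on $(\ell,X)$-inputs both sides vanish via Cartan's formula and $\mathcal{L}_\ell\nu=0$ together with the minimal constraint, and on horizontal inputs the right-hand side reduces to $d\nu(U,V)$ (using the horizontal torsion condition), which can be shown to vanish via the frame-level expressions of the lemma's proof. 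For the reverse direction of bullet 3, with $T=0$, the hypotheses $\nu(R)=0$ and $(\nabla_eR)\cdot\nu=0$ serve as the first-order integrability conditions for the linear system $\nabla\nu=0$. Setting $\sigma:=\nabla\nu$ and repeatedly applying the Ricci identity, the curvature hypotheses force $\sigma$ to be parallel; combined with the algebraic constraint $\sigma(\cdot,\ell)=0$ (which follows from differentiating $\nu(\ell)=1$ and using $\nabla\ell=0$) and with the 3D frame decomposition from Lemma~\ref{nonvanishing-torsion-lemma}, one concludes $\sigma=0$.

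The main obstacle is the reverse direction of bullet 3: while the Ricci identity cleanly produces the stated conditions as necessary, showing them sufficient requires tracking precisely how the rigidifying constraints $\nabla\ell=\nabla g=0$, $T=0$, and $\nu(\ell)=1$ restrict the possible parallel deviations $\sigma=\nabla\nu$ from zero. The restriction to dimension three is essential here, as in higher dimensions further derivatives of the curvature would appear as independent integrability conditions and would need to be included in the bullet statements.
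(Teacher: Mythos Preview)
Your treatment of the non-vanishing torsion cases (bullets 1 and 2) is essentially the paper's argument: both you and the paper reduce to Lemma~\ref{nonvanishing-torsion-lemma} with $N=0$ after observing that $\nu(T(\ell,X))=0$ forces $\mathcal{L}_\ell\nu=0$ via the minimal constraint. One caveat: you go further than the paper and try to handle the antisymmetric part $\nabla_{[a}\nu_{b]}$ separately, but your claim that $d\nu(U,V)$ vanishes on horizontal inputs ``via the frame-level expressions of the lemma's proof'' is not supported there---in the coordinates of that proof $d\nu|_{\mathrm{hor}}=(\omega_{2,1}-\omega_{1,2})\,dx^1\wedge dx^2$, which has no reason to vanish. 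The paper's own proof does not address this point either, so in that respect you are at parity with the paper, but the step as you wrote it is not correct.

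The substantive divergence is in the reverse implication of bullet~3. The paper works directly in the frame: it writes out that in the torsion-free setup the connection coefficients of Lemma~\ref{nonvanishing-torsion-lemma} collapse to Equation~\eqref{eqn:Thm34-coeffs}, observes $\nu(R(X,\ell)Y)=0$ forces $\Gamma^1_{\ii\jj}$ to be $u$-independent, and then computes explicitly that $(\nabla_\eee R)^1{}_{\bb\cc\dd}=\Gamma^1_{\ii\eee}\,R^\ii{}_{\bb\cc\dd}$. Because in dimension three the only algebraically independent horizontal curvature component is $R^2{}_{323}$, the vanishing of this product immediately kills all $\Gamma^1_{\ii\jj}$, hence $\nabla\nu=0$. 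Your route---setting $\sigma=\nabla\nu$ and arguing that the curvature hypotheses make $\sigma$ \emph{parallel}---does not go through as stated. What the Ricci identity and the two curvature hypotheses actually give you is $\nabla_{[a}\sigma_{b]c}=0$ together with the algebraic relation $R^d{}_{abc}\,\sigma_{ed}=0$; neither of these says $\nabla\sigma=0$. Extracting $\sigma=0$ from that algebraic relation is precisely the step that needs the 3D count of independent curvature components, which is the frame computation the paper does. So your bullet~3 sketch is missing the key mechanism and cannot be completed along the ``$\sigma$ parallel'' line you indicate; you would end up reproducing the paper's frame argument anyway.
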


\begin{proof}

We begin by considering the case where the torsion of $\nabla$ is non-vanishing. In that case, following Lemma~\ref{nonvanishing-torsion-lemma} with $N = 0$, it suffices to show that $\mathcal{L}_{\ell} \nu = 0$. However, using Definition~\ref{minimal-torsion-definition}, we see that if $\nu(T(\ell,X)) = 0$, then $\mathcal{L}_{\ell} \nu = 0$. But these are conditions required for the theorem when the torsion is non-vanishing, so this case is resolved.

It thus suffices to show that when the third identity holds, $(M,g,\ell,\nu,\nabla)$ is a special Carrollian manifold, and that if $(M,g,\ell,\alpha,\nabla)$ is a special Carrollian manifold with vanishing torsion, then the third condition follows. We do so using the notation and computations found in the proof of Lemma~\ref{nonvanishing-torsion-lemma}.

First, note that because the torsion vanishes, it follows that $\mathcal{L}_{\ell} \nu = 0$ and $\mathcal{L}_{\ell} g = 0$, and both the spatial components of the coframe and the Ehresmann connection $\nu$ are independent of $u$.  Thus, from Equations~\ref{lie-derivs} and~\ref{eqn:Thm32c}, it follows that, in that choice of frame,
    \begin{equation}
    \begin{aligned}
        \Gamma^{B}_{~1A} =& 0, \\
        \Gamma^{2}_{~2A} =& 0, \\
        \Gamma^{3}_{~3A} =& 0, \\
        \Gamma^1_{~21} =& 0, \\ 
        \Gamma^1_{~31} =& 0, \\
         \Gamma^2_ {~31} =& 0, \\
         \Gamma^1_{~23} =& \Gamma^1_{~32} + \frac{\nu_{2,1}-\nu_{1,2}}{m^1_{~1} m^2_{~2}}, \\
         \Gamma^2_{~32} =& -\Gamma^3_{~22} = -\frac{  (m^1_{~1})_{,2}}{m^1_{~1} m^2_{~2}}, \\
       \Gamma^2_{~33} = & -\Gamma^3_{~23} = - \frac{ (m^2_{~2})_{,1}-(m^2_{~1})_{,2}}{m^1_{~1} m^2_{~2}}.
    \end{aligned} \label{eqn:Thm34-coeffs}
    \end{equation}
The only connection coefficients left unfixed here are $\Gamma^1_{~\ii \jj}$, where $\ii,\jj = 2,3$.
Now we note that $R^\ai_{~\bb \cc \dd}\nu_\ai = 0$ by hypothesis. That is, in the choice of frame, we have that $R^{1}_{~\bb\cc\dd} = 0$ but perhaps $R^\ii_{~\bb\cc\dd} \neq 0$ for $\ii = 2,3$.

Now by hypothesis, $\nu(R(X,\ell) Y) = 0$. Taking note of the connection coefficients above, we have that
$$\Gamma^1_{~\ii \jj,u} e_1 \otimes \theta^\ii \otimes \theta^\jj\,,$$
i.e. $\Gamma^1_{~\ii\jj}$ is independent of the $u$ coordinate. Now, examining the covariant derivative of the curvature tensor:
    \begin{equation}
    \begin{aligned}
        R^\ai_{~\bb\cc\dd;\eee}\nu_\ai =& \Gamma^1_{~i\eee} R^\ii_{~\bb\cc\dd} - \Gamma^\ff_{~\bb\eee} R^1_{~\ff\cc\dd} - \Gamma^\ff_{~\cc\eee} R^1_{~\bb\ff\dd} - \Gamma^\ff_{~\dd\eee} R^1_{~\bb\cc\ff} \\ 
        & = \Gamma^1_{~i\eee} R^\ii_{~\bb\cc\dd}.  
    \end{aligned}
    \end{equation}
    \noindent If this vanishes, then it follows that $\Gamma^1_{~31}=\Gamma^1_{~21} = \Gamma^1_{~23} = \Gamma^1_{~32}=\Gamma^1_{~22}=\Gamma^1_{~33}=0$, since the sole algebraically independent component of the curvature tensor is $R^2_{~323}$. If this component vanishes then the transverse space reduces to the Euclidean space and the result trivially holds. \color{black}

    On the other hand, if we assume that $(M,g,\ell,\nu,\nabla)$ is a special Carrollian manifold with $\nabla$ torsion-free, then we may express
    $$R^\ai{}_{~\dd\bb\cc} \nu_\ai = -2 \nabla_{[\bb} \nabla_{\cc]} \nu_\dd\,.$$
    But as $\nu$ is parallel by assumption, this vanishes. Hence, the third constraint is trivially satisfied.

\end{proof}

\begin{remark}
    We note that while the proof of this theorem has been presented for a 3-dimensional special Carrollian geometry, this proof can naturally be extended to higher dimensions with two minor modifications. The first modification is choosing two horizontal vector-fields for which $g(Y, T(\ell, X)) \neq 0$ in the case where the torsion is trace-free in order to construct $e_2$ and $e_3$. The second modification arises in the torsion-free subcase where it may be necessary to take higher order covariant derivatives of the curvature tensor in order to exhaust the conditions for the connection coefficients in $\nabla \theta^1$.  
\end{remark}

\begin{remark}
    As Theorem~\ref{Thm:BMcharacterization} is a necessary and sufficient condition for the existence of an Ehresmann connection that satisfies the conditions required to produce a special Carrollian manifold, it is clear that, given an affine connection, there does not always exist an Ehresmann connection that makes $(M,g,\ell,\nabla)$ into a special Carrollian manifold.
\end{remark}

While the above theorem heavily constrains the space of possible Ehresmann connections for a given affine connection so that the tuple $(M,g,\ell,\nu,\nabla)$ is a special Carrollian manifold, it is not clear that $\nu$ is unique. The following theorem shows that it is not always unique.

\begin{theorem}
    Let $(M,g,\ell, \nu \nabla)$ be a Carrollian manifold. The Ehresmann connection, $\nu$, is unique if and only if the spatial metric, $g$, does not admit a vorticity-free Killing vector field. 
\end{theorem}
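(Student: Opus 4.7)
The plan is to parametrize every admissible alternate Ehresmann connection by its offset $\eta := \nu' - \nu$ from $\nu$, and then to show that $\eta$ can be non-trivial exactly when the spatial metric carries a vorticity-free Killing vector field. To begin, suppose $(M, g, \ell, \nu', \nabla)$ is another special Carrollian manifold with the \emph{same} underlying connection $\nabla$. Then $\nu(\ell) = \nu'(\ell) = 1$ forces $\eta(\ell) = 0$ (so $\eta$ is horizontal), $\nabla \nu = \nabla \nu' = 0$ gives $\nabla \eta = 0$, and both $\nu$ and $\nu'$ being principal yields $\mathcal{L}_\ell \eta = 0$.

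Next I would translate the remaining axiom, minimality of the torsion with respect to $\nu'$, into a constraint on $\eta$. Since $\ker(\nu') = \{X - \eta(X)\ell : X \in \ker(\nu)\}$, expanding condition~\ref{horiz-constraint} for $\nu'$ and using the fact that it already holds for $\nu$ reduces it to
\begin{equation*}
\eta(X)\,T(\ell, Y) \;=\; \eta(Y)\,T(\ell, X), \qquad \forall X, Y \in \ker(\nu).
\end{equation*}
Writing $\sigma(X) := T(\ell, X)$, which by Lemma~\ref{thm:CarrollianConnection} is the $g$-symmetric horizontal endomorphism satisfying $g(\sigma(X), Y) = \tfrac{1}{2}(\mathcal{L}_\ell g)(X, Y)$, this identity (together with the $g$-symmetry of $\sigma$) implies that if $\eta \neq 0$ then $\sigma = \mu\, K \otimes \eta$, where $K$ is the horizontal $g$-dual of $\eta$ and $\mu$ is a scalar function. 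Using $\nabla \eta = 0$, $\nabla \ell = 0$, and $\mathcal{L}_\ell \eta = 0$, the identity $(\mathcal{L}_\ell \eta)(Z) = (\nabla_\ell \eta)(Z) + \eta(T(\ell, Z))$ collapses to $\eta(\sigma(Z)) = 0$ for all horizontal $Z$; applied to the rank-one form this gives $\mu\,g(K, K) = 0$, forcing $\mu = 0$ (the horizontal restriction of $g$ is Riemannian and $K \neq 0$). Hence $\sigma = 0$, equivalently $\mathcal{L}_\ell g = 0$, and minimality of the torsion then forces $T \equiv 0$.

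With the torsion vanishing, the problem reduces to a classical Riemannian statement on the horizontal distribution: $\eta$ is a parallel horizontal 1-form, and its $g$-dual $K$ is a parallel horizontal vector field. Any such $K$ is both a Killing vector for the spatial metric (the symmetric part of $\nabla K$ vanishes, giving $\mathcal{L}_K g = 0$) and vorticity-free ($d\eta = 2\nabla_{[a}\eta_{b]} = 0$ since $\nabla \eta = 0$ and $T = 0$). Conversely, given a non-trivial vorticity-free Killing vector field $K$ of the spatial metric, its $g$-dual $\eta$ is parallel under the (torsion-free) Levi-Civita connection, and one verifies directly that $\nu' := \nu + \eta$ satisfies all the defining conditions of a special Carrollian manifold with the same $\nabla$, thereby violating uniqueness.

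The main obstacle is the step forcing $\mu = 0$: condition~\ref{horiz-constraint} alone permits $\sigma$ to take a non-trivial rank-one form aligned with $\eta$, and it is only the additional principality condition $\mathcal{L}_\ell \eta = 0$ that, paired with $\nabla \eta = 0$, cuts this down to $\sigma = 0$ and hence collapses the torsion entirely. Once this reduction is secured, the remainder is the standard Riemannian characterization of parallel 1-forms as vorticity-free Killing fields.
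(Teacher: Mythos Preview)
Your forward direction is considerably more careful than the paper's argument and is correct. The paper's proof is very brief: working in coordinates where $\nu = dt'$, it simply observes that any other parallel Ehresmann connection $\tilde\nu = dt' + \beta_i\,dx^i$ satisfies $\nabla(\beta_i\,dx^i)=0$, then reads off the Killing equation from the symmetric part and vorticity-freeness from the antisymmetric part. The paper does \emph{not} explicitly verify that the torsion of $\nabla$ is minimal with respect to $\tilde\nu$, nor that $\tilde\nu$ is principal. You check all of the SCM axioms and extract from them the stronger conclusion that a non-trivial $\eta$ forces $\sigma=0$ and hence $T\equiv 0$; this reduction is valid and is an insight the paper's short proof does not make explicit.

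There is, however, a gap in your converse direction. You assert that a vorticity-free Killing field $K$ of the spatial metric has $g$-dual $\eta$ parallel ``under the (torsion-free) Levi-Civita connection,'' and that $\nu':=\nu+\eta$ then satisfies all SCM axioms for the \emph{same} $\nabla$. But the theorem does not assume $T=0$, and your own forward argument shows that when $T\neq 0$ the Ehresmann connection is automatically unique---yet the spatial leaves may still carry vorticity-free Killing fields (take, e.g., flat leaves with a $u$-dependent conformal factor, so $\mathcal{L}_\ell g\neq 0$). In that regime the biconditional, read literally, fails. To close the argument you must either (i) interpret ``vorticity-free Killing vector field of the spatial metric $g$'' as $\nabla$-parallelism of a horizontal $1$-form on all of $M$ (which is what the paper's proof actually derives), and then show this already entails $T=0$ so your converse goes through; or (ii) note explicitly that the equivalence only holds once one is in the torsion-free sector. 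As written, the phrase ``one verifies directly'' hides exactly the step where $\nabla\eta=0$ on $M$ (not merely $\bar\nabla\bar\eta=0$ on leaves) and minimality with respect to $\nu'$ must be checked.
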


\begin{proof}
    In local coordinates, where $\nu = dt'$, suppose that $\tilde{\nu} = dt' + \beta_i dx^i$ is some other parallel Ehresmann connection. Then the condition $\nabla \tilde{\nu} = 0$ is equivalent to $\nabla (\beta_i dx^i) = 0$. Taking the symmetric and anti-symmetric parts of this rank 2 tensor equation, we find that the spatial part of $\tilde{\nu}$ satisfies the Killing equations and is vorticity free, since $d(\beta_i dx^i ) =0$. From this proof the opposite direction follows trivially. 
\end{proof}

\section{Minimal Features of Potential Carroll Structures}\label{sec:PCSs}

Having resolved the question of geometric constraints required to minimally-specify a special Carrollian manifold, we now turn to the same question for potential Carroll structures.

\subsection{1-form implies connection}
As in Subsection~\ref{sec:SCM-1form-con}, the existence of a connection satisfying a particular constraint equation given the data of a Carrollian structure and a 1-form is generally underdetermined. As such, one might expect that, with sufficiently strong constraints, any 1-form will naturally determine a connection that makes the 1-form a metric potential. This expectation is captured in the following theorem.

\begin{theorem}
Let $(M,g,\ell)$ be a Carollian structure equipped with an Ehresmann connection $\alpha$ satisfying $\alpha(\ell) = 1$. Then there exists a unique torsionful connection $\nabla$ such that $(M,g,\ell,\alpha,\nabla)$ is a potential Carroll structure.
\end{theorem}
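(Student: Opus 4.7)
The plan is to parallel Theorem~\ref{thm:preBM}, adapting its strategy to the fact that here $\alpha$ serves as a symmetric potential for $g$ rather than being parallel. By Lemma~\ref{minimal-lemma}, the minimality hypothesis already pins down the torsion of $\nabla$---and hence the antisymmetric part $\Gamma^c{}_{[ab]}$ of the Christoffel symbols---uniquely in terms of $(M,g,\ell,\alpha)$. So the task reduces to showing that the remaining requirements $\nabla \ell = 0$, $\nabla g = 0$, and $\nabla \odot \alpha = g$ jointly determine the symmetric part $\Gamma^c{}_{(ab)}$ uniquely and in a mutually consistent way.

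I would then adopt coordinates $x^a = (t,x^i)$ adapted to $\ell = \partial_t$, so that $\alpha = dt + \alpha_i(t,x^j)\,dx^i$. Exactly as in the proof of Theorem~\ref{thm:preBM}, the condition $\nabla \ell = 0$ yields $\Gamma^a{}_{bt} = 0$, fixing $\Gamma^a{}_{(bt)}$, while the purely horizontal components of $\nabla g = 0$ reduce to a Koszul-type compatibility equation between the invertible transverse block $g_{ij}$ and the spatial Christoffel symbols, uniquely determining $\Gamma^i{}_{(jk)}$ (the antisymmetric part being already fixed by minimality).

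The new ingredient is the metric-potential condition $\nabla \odot \alpha = g$, which in coordinates reads
\[
\partial_{(a}\alpha_{b)} - \Gamma^c{}_{(ab)}\,\alpha_c = g_{ab}.
\]
Using $\alpha_t = 1$, this rearranges to
\[
\Gamma^t{}_{(ab)} = \partial_{(a}\alpha_{b)} - \Gamma^i{}_{(ab)}\,\alpha_i - g_{ab},
\]
which determines $\Gamma^t{}_{(ab)}$ in terms of data already fixed at the previous step.

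The main obstacle, as in Theorem~\ref{thm:preBM}, is verifying mutual consistency of the three conditions; the delicate case is $b = t$ in the metric-potential equation. Substituting $\alpha_t = 1$, $\Gamma^c{}_{at} = 0$, and $g_{at} = 0$, this reduces to the identity
\[
\partial_t \alpha_a = \alpha_c\,T^c{}_{ta},
\]
which is precisely Constraint~\ref{minimal-constraint} applied to $\alpha$ and therefore holds automatically under the minimal-torsion hypothesis. The $a = b = t$ case collapses trivially to $0 = 0$ using $\partial_t \alpha_t = 0$ and $g_{tt} = 0$. Combined with the uniqueness of the minimal torsion guaranteed by Lemma~\ref{minimal-lemma}, this establishes both existence and uniqueness of the desired $\nabla$.
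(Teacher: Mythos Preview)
Your proposal is correct and follows essentially the same approach as the paper's proof: the same adapted coordinates, the same sequence of constraints ($\nabla\ell=0$, then spatial $\nabla g=0$, then the potential equation) to pin down the symmetric Christoffel symbols, and the same consistency check at $b=t$ reducing to Constraint~\ref{minimal-constraint}. Your additional remark on the $a=b=t$ case is a minor elaboration but does not depart from the paper's argument.
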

\begin{proof}
This proof closely follows that of Theorem~\ref{thm:preBM}.

We work in coordinates $x^a = (t,x^i)$ such that $\ell = \partial_t$. We will construct the connection in these coordinates by constructing the Christoffel symbols. From Definition~\ref{def:PCS}, we impose several constraints: that $\nabla \ell = 0 = \nabla g$, that $\nabla \odot \alpha = g$, and that the torsion of $\nabla$ is minimal. Note from Lemma~\ref{minimal-lemma} that the minimal torsion is fixed once a 1-form $\alpha$ is specified, so we need only fix the symmetric components of the Christoffel symbols. We will do so by demanding that $\nabla \ell = 0 = \nabla g$ and that $\nabla \odot \alpha = g$.

In the specified coordinates, the condition $\nabla \ell = 0$ is equivalent to \begin{align} \label{first-connect}
    \Gamma^a_{bt} = 0\,.
\end{align}
Note that because the torsion is fixed, Equation~\ref{first-connect} implicitly fixes $\Gamma^a_{(bt)}$.

Now consider the purely ``spatial" part of the covariant derivative of the metric:
\begin{align*} \nabla_{i}g_{jk} & = \partial_i g_{jk} - \Gamma^a_{ij}g_{ak} - \Gamma^a_{ik}g_{aj} \\
&= \partial_i g_{jk} - \Gamma^l_{ij}g_{lk} - \Gamma^l_{ik}g_{lj},
\end{align*}
where in the second line we again used $g_{tb} = 0$. Now $\nabla_i g_{jk} = 0$ is identical to the compatibility condition between the \emph{invertible} metric $g_{ij}$ and the purely `spatial' connection whose Christoffel symbols are $\Gamma^i_{jk}$. Hence, standard arguments of pseudo-Riemannian geometry dictate that $\nabla_i g_{jk} = 0$ if and only if
\begin{equation} \label{Levi-Civita}
\Gamma^i_{(jk)} = \frac{1}{2}g^{lj}(\partial_i g_{kl} + \partial_k g_{li} - \partial_l g_{ik}).
\end{equation}
This fixes the purely horizontal component of the connection, as the torsion (which picks out the antisymmetric piece) is fixed.

Finally, from the potential equation $\nabla_{(a}\alpha_{b)}  = g_{ab}$, we have 
\begin{align} 
\partial_{(a}\alpha_{b)} - \Gamma^c_{(ab)}\alpha_c  = g_{ab} \label{last-connectEq1} \\
\implies \Gamma^t_{(ab)}\alpha_t + \Gamma^i_{(ab)}\alpha_i  = \partial_{(a}\alpha_{b)} - g_{ab} \nonumber \\
\implies  \Gamma^t_{(ab)}  = - \Gamma^i_{(ab)}\alpha_i + \partial_{(a}\alpha_{b)} - g_{ab}, \label{last connect}
\end{align}
where in the last line we used the fact that $1 = \alpha_i\ell^i = \alpha_t$. Note that Equation~\ref{first-connect} and Equation~\ref{Levi-Civita} ensure that $\Gamma^i_{(ab)}$ is fixed in Equation~\ref{last connect}, and so $\Gamma^t_{(ab)}$ is determined by Equation~\ref{last connect}.

All that remains to check is that Equation~\ref{last connect} is compatible with the parallel vector condition $\Gamma^a_{bt} = 0$. By setting $b = t$ in Equation~\ref{last-connectEq1} and using the facts that $\alpha_t = 1$, $\Gamma^c_{at} = 0$, and $g_{at} = 0$, we find that
$$\partial_t \alpha_a = \Gamma^c_{ta} \alpha_c \,.$$
But because $\Gamma^c_{at} = 0$, we have that $\Gamma^c_{ta} = T^c_{ta} = \ell^b T^c_{ba}$, so that
$$\partial_t \alpha_a = \ell^b T^c_{ba} \alpha_c\,.$$
But in these coordinates, this is equivalent to Constraint~\ref{minimal-constraint}, which we have already assumed holds by hypothesis.

So Equations~\ref{first-connect},~\ref{Levi-Civita}, and~\ref{last connect}, together with the condition that the torsion is minimal, uniquely specify the connection and are mutually consistent. It follows that $(M,g,\ell,\alpha,\nabla)$ is a potential Carroll structure.
\end{proof}

\subsection{Connection implies 1-form}
As in Subsection~\ref{sec:SCM-con-1form}, it is clear that significant geometric constraints must be placed on a Carrollian structure to ensure that there exists a 1-form that is a metric potential. This fact is captured in the following theorem.

\begin{theorem} \label{Thm:PCSconnection}
    Let $(M^3,g,\ell)$ be a Carrollian structure equipped with a linear connection $\nabla$ satisfying $\nabla \ell = 0 = \nabla g$.
    Suppose that there exists an Ehresmann connection $\alpha$ such that the torsion of $\nabla$ is minimal with respect to the tuple $(M,g,\ell,\alpha)$. Then $(M,g,\ell,\alpha,\nabla)$ is a potential Carroll structure if and only if there exists an orthogonal frame $\{\ell,e_2,e_3\}$, where $\alpha(e_2) = \alpha(e_3) = 0$, and one of the following identities holds:
        \begin{itemize}
        \item $V = Trace(T) \neq 0$ and $V(\ell) \neq 0$, 
        $$\nabla_{(a}\gamma_{b)} = g_{ab}, \qquad \gamma_b = \frac{V_b-\mathcal{L}_\ell \alpha_b}{V(\ell)} $$
        \item $V(\ell) = 0$
        $$-\nabla_d T_{abc} = \nabla_d (\mathcal{L}_\ell g_{a[b}) \alpha_{c]} + \mathcal{L}_\ell g_{a[b} g_{c]d}, $$
        \item $T = 0$, $\alpha(R(\cdot,\ell) \cdot ) = 0$, and 
        $$ \alpha_a e_2^{~b} e_3^{~d} \nabla_{(f} \left[ R^a_{~|b|c)d} - \ell^a R^e_{~|b|c)d}\alpha_e \right]  = X g_{fc}, $$
    \end{itemize}
    where $X$ is a scalar function.
\end{theorem}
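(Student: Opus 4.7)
The plan is to mirror the structure of the proof of Theorem~\ref{Thm:BMcharacterization}, substituting the potential equation $\nabla \odot \alpha = g$ for the parallel--form condition $\nabla \nu = 0$. The first two bullets will follow immediately from Lemma~\ref{nonvanishing-torsion-lemma} with $N = g$, since the degenerate metric is horizontal ($g(\ell, \cdot) = 0$), so the real work concentrates on the torsion--free case.

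For the first two cases, I would observe that $\nabla_{(a}\alpha_{b)} = g_{ab}$ is precisely of the shape $\nabla_{(a}\omega_{b)} = N_{ab}$ covered by Lemma~\ref{nonvanishing-torsion-lemma}, with $\omega = \alpha$ and $N = g$. When $V(\ell) \neq 0$, the lemma directly yields $\nabla_{(a}\gamma_{b)} = g_{ab}$ with $\gamma_b = (V_b - \mathcal{L}_\ell \alpha_b)/V(\ell)$; when $V(\ell) = 0$, the covariant--derivative form of the torsion constraint assumes the stated shape $-\nabla_d T_{abc} = \nabla_d(\mathcal{L}_\ell g_{a[b}) \alpha_{c]} + \mathcal{L}_\ell g_{a[b}\, g_{c]d}$. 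Both reverse implications simply retrace the same computation.

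For the torsion--free case, I would adopt the adapted coordinates and frame from the proof of Lemma~\ref{nonvanishing-torsion-lemma}. Vanishing torsion forces $\mathcal{L}_\ell g = 0$ and $\mathcal{L}_\ell \alpha = 0$, which, combined with $\nabla \ell = 0 = \nabla g$, pins down all Christoffel symbols exactly as in Equation~\ref{eqn:Thm34-coeffs} (with $\alpha$ replacing $\nu$), leaving only the coefficients $\Gamma^1_{ij}$ for $i,j \in \{2,3\}$ free. The orthogonal--frame hypothesis $\alpha(e_2) = \alpha(e_3) = 0$ ensures that $\nabla \odot \alpha = g$ restricted to horizontal directions reduces to the purely algebraic condition $\Gamma^1_{(ij)} = g_{ij}$, while the antisymmetric piece $\Gamma^1_{[ij]}$ is fixed by minimal torsion. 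To connect this to the stated curvature identity, I would commute covariant derivatives on $\alpha$: differentiating $\nabla_{(a}\alpha_{b)} = g_{ab}$ and using $\nabla g = 0$ gives $\nabla_c \nabla_{(a}\alpha_{b)} = 0$, and applying $[\nabla_c, \nabla_d]\alpha_b = -R^a{}_{bcd}\alpha_a$ produces a curvature identity for the $\alpha$--projection of $R$. The projector $\delta^a_c - \ell^a \alpha_c$ appearing in the statement isolates the components of $R^a{}_{bcd}$ orthogonal to $\ell$ in the upper slot---precisely the components not already determined by the torsion--free conditions---while the contraction with $e_2^{~b} e_3^{~d}$ extracts the single algebraically independent transverse curvature component available in dimension three, namely $R^2{}_{323}$.

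The hardest step is expected to be pinning down the reverse direction: showing that the availability of horizontal coefficients $\Gamma^1_{ij}$ realizing $\nabla \odot \alpha = g$ is equivalent to the full curvature--derivative identity in the third bullet, with the scalar $X$ correctly parametrizing the residual pure--trace freedom in $\Gamma^1_{(ij)}$ left over because $N = g$ itself carries a pure--trace piece. This amounts to verifying, frame by frame, that the obstruction's algebraic type is exactly ``proportional to $g_{fc}$'' and that no finer algebraic obstructions survive---a check that in the torsion--free subcase collapses to a single scalar identity, consistent with the one independent curvature scalar available.
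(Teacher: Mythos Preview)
Your handling of the first two bullets matches the paper exactly: both follow from Lemma~\ref{nonvanishing-torsion-lemma} with $N=g$. Your setup for the torsion-free bullet is also the same (adapted frame, $\mathcal{L}_\ell g=\mathcal{L}_\ell\alpha=0$, all connection coefficients pinned down except $\Gamma^1_{\ii\jj}$), but the direction \emph{condition $\Rightarrow$ PCS} is where your plan is thin. Commuting covariant derivatives on $\alpha$ presupposes $\nabla\odot\alpha=g$, so that manoeuvre only delivers PCS $\Rightarrow$ condition. The paper's route for the converse is a direct frame computation: one verifies the identity $\alpha_\ai\nabla_\ff\tilde R^{\ai}{}_{\bb\cc\dd}=\Gamma^1_{\ii\ff}\tilde R^{\ii}{}_{\bb\cc\dd}$ (with $\tilde R^{\ai}{}_{\bb\cc\dd}=R^{\ai}{}_{\bb\cc\dd}-\ell^{\ai}\alpha_\eee R^{\eee}{}_{\bb\cc\dd}$ the horizontal projection), contracts with $e_2^{\bb}e_3^{\dd}$, and symmetrizes in $(\ff,\cc)$. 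The third bullet then becomes a linear system in the three unknowns $\Gamma^1_{22},\Gamma^1_{33},\Gamma^1_{32}$ with coefficient the single transverse curvature $R^2{}_{323}$; its unique solution is $\Gamma^1_{22}=\Gamma^1_{33}=-1$ together with the correct off-diagonal value, which is precisely $\nabla\odot\alpha=g$. (A sign slip: in this frame $\nabla_{(a}\alpha_{b)}=-\Gamma^1_{(ba)}$, so your $\Gamma^1_{(ij)}=g_{ij}$ should read $-g_{ij}$.)

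Two smaller corrections. Your reading of $X$ as ``residual pure-trace freedom'' is backwards: there is no freedom. The clause ``$=Xg_{fc}$ for some scalar $X$'' is a \emph{constraint} asserting that the left side is pure trace; once it holds, the linear system above fixes $\Gamma^1_{\ii\jj}$ uniquely, and the paper then reads off $X=-\Gamma^1_{\ii\jj}R^{\ii}{}_{2}{}^{\jj}{}_{3}$ a posteriori. You also do not use the hypothesis $\alpha(R(\cdot,\ell)\cdot)=0$: in the frame this is $\partial_u\Gamma^1_{\ii\jj}=0$, which is what makes the $\Gamma^1_{\ii\jj}$ extracted from the curvature-derivative condition consistent with the $u$-independence already forced on the rest of the connection.
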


\begin{proof}

We begin by considering the case where the torsion of $\nabla$ is non-vanishing. In that case, using Lemma~\ref{nonvanishing-torsion-lemma} with $N = g$, the first two conditions follow immediately. Hence, all that is left is to consider the case where torsion vanishes.

    If the torsion tensor vanishes identically, then $\mathcal{L}_\ell \alpha = 0$ and $\mathcal{L}_\ell g = 0$. It follows that the connection coefficients in equation \eqref{eqn:Thm32c} simplify significantly, but $\Gamma^1_{~23}$ and $\Gamma^1_{~32}$ are still non-zero. This contributes to vertical contributions to the curvature tensor. However, using the definition of the curvature tensor, it follows that
    \begin{equation}
        R^\ai_{~\bb\cc\dd} \ell^\cc \alpha_\ai = \Gamma^1_{~\ii\jj,u} e_1 \otimes \theta^\ii \otimes \theta^\jj  
    \end{equation}
    \noindent Hence, imposing the vanishing of this condition requires that $\Gamma^1_ {~\ii\jj}$ must be independent of the $u$ coordinate.
    
    To derive algebraic conditions on the necessary connection coefficients required to prove that $ \nabla \odot\alpha = g$ follows from the last condition, we consider the purely horizontal part of the curvature tensor: 
    \begin{equation}
        \tilde{R}^\ai_{~\bb\cc\dd} := R^\ai_{~\bb\cc\dd} - \ell^\ai R^\eee_{~\bb\cc\dd} \alpha_\eee  
    \end{equation}
    This ensures that $\tilde{R}^\ai_{~\bb\cc\dd} \alpha_\ai = 0$ and so
    \begin{equation}
    \begin{aligned}
        \tilde{R}^\ai_{~\bb\cc\dd;\ff}\alpha_\ai =  \Gamma^1_{~\ii\ff} \tilde{R}^\ii_{~\bb\cc\dd}. \label{eqn:PotentialCurvatureDerivative} 
    \end{aligned}
    \end{equation}
    Computing the curvature tensor using equation \eqref{eqn:Thm32c} with $\mathcal{L}_\ell g = 0$, it is straightforward to show that $\tilde{R}^\ai_{~\bb\cc\dd} = R^\ii_{~\jj\kk\li}$, that is, the curvature tensor of the horizontal space arising from the Levi-Civita connection $\Gamma^\ii_{~\jj\kk}$. Imposing the condition $$ \Gamma^1_{~I(\ff}\tilde{R}^\ii_{~|\bb|\cc)\dd}e_2^\bb e_3^\dd = X g_{\ff\cc} $$ yields 3 linear equations for 3 unknowns $\Gamma^1_{~\jj\kk},~ (\jj \geq \kk)$ whose solution give $$\Gamma^1_{~22} = \Gamma^1_{~33} = -1 \text{ and } \Gamma^1_{~32} = \frac{- \alpha_{2,x}+\alpha_{1,y}}{2 m^1_{~1} m^2_{~2}}$$ from which it follows that $ \nabla \odot\alpha = g$. 

    As the only condition required to be a potential Carroll structure not granted by hypothesis is that $\nabla \odot \alpha = g$, and we have established that this identity holds for all of the cases considered, the proof is complete.

    In order to prove the other direction, we need only consider the case where torsion vanishes identically. A potential Carrollian structure always permits a coframe where 
    \begin{equation}
        \begin{aligned}
            \nabla \theta^1 &= - \theta^2 \theta^2 - \theta^3 \theta ^3 - d \theta^1, \\
            \nabla \theta^\ii &= \Gamma^\ii_{~\jj\kk} \theta^\jj \theta^\kk,
        \end{aligned}
    \end{equation}
    \noindent where $\Gamma^\ii_{\jj\kk}$ is the Levi-Civita connection of the transverse space. Using the definition of the curvature tensor \eqref{eqn:GeomTensors} the above condition on the connection coefficients yields the only two non-zero components of the curvature tensor
    \begin{equation} \label{nonzero-curvatures}
        R^1_{~\ii\ii\jj}, \text{ and } R^\ii_{~\jj\kk\li}, 
    \end{equation}
    \noindent from which it follows that $$\alpha(R(\cdot, \ell)\cdot) = 0. $$ (Note that the repeated indices in $R^1{}_{IIJ}$ in Equation~\ref{nonzero-curvatures} does not imply Einstein summation.)
    Computing $\tilde{R}^\ai_{~\bb\cc\dd} = R^\ai_{~\bb\cc\dd} - \ell^\ai R^\eee_{~\bb\cc\dd}\alpha_\eee$ and taking the covariant derivative, the argument following equation \eqref{eqn:PotentialCurvatureDerivative} applies and the connection coefficients in $\nabla \theta^1$ already satisfy these linear equations and hence  $$ \Gamma^1_{~\ii(\ff}\tilde{R}^\ii_{~|\bb|\cc)\dd}e_2^\bb e_3^\dd = X g_{\ff\cc}. $$ where the scalar X takes the form    $$ X = - \Gamma^1_{~\ii\jj}R^{\ii~\jj}_{~2~3}.$$
\end{proof}

\begin{remark}
    While this proof has been presented in three dimensions, it is not difficult to extend this to the $d$-dimensional case by changing the last condition to hold for all pairs of horizontal frame vectors.  
\end{remark}

\begin{remark}
    Just in the case with Theorem~\ref{Thm:BMcharacterization}, from Theorem~\ref{Thm:PCSconnection}, it follows that even given a connection on a Carrollian structure, it is not guaranteed that there exists an Ehresmann connection $\alpha$ that makes the tuple $(M,g,\ell,\alpha,\nabla)$ a potential Carroll structure.
\end{remark}

\section{Relating special Carrollian manifolds to potential Carroll structures} \label{sec:relations}

It should be evident from the results of Sections~\ref{sec:SCMs} and~\ref{sec:PCSs} that special Carrollian manifolds and potential Carroll structures are closely related. However, a priori, it is not clear when a potential Carroll structure admits, by solely modifying the Ehresmann connection, a special Carrollian manifold. Similarly, it is not clear when a special Carrollian manifold admits, by solely modifying the Ehresmann connection, a potential Carroll structure.

In this section we prove two theorems that show how heavily restricted the space of special Carrollian manifolds $(M,g,\ell,\alpha,\nabla)$  that admit a potential Carroll structure $(M,g,\ell,\alpha+\theta,\nabla)$ is and vice versa.

\subsection{From potential Carroll structures to SCMs}
We begin by showing that the existence of a modified Ehresmann connection turning a potential Carroll structure into a special Carrollian manifold heavily restricts the behavior of the scalar curvature on horizontal embedded surfaces in $M^3$.
\begin{theorem} \label{thm:PCS-to-SCMs}
    Let $(M^3,g,\ell,\alpha,\nabla)$ be a potential Carroll structure and suppose there exists $\theta \in \Gamma(T^* M)$ such that $\nu:= \alpha + \theta$ satisfies $\nabla \nu = 0$, $\nu(\ell) = 1$, and $T^a_{bc} \nu_a = 0$. Furthermore, suppose that $\ker \nu$ is integrable. Then, let $\iota : \Sigma \hookrightarrow M$ determined by $T \Sigma = \ker \nu$ be any horizontal embedded surface in $M$ with induced metric $\bar{g} := \iota^*g$ and induced Levi-Civita connection $\bar{\nabla}$. Finally, define $B_{ab} = \bar{g} - \bar{\nabla}_{(a} (\iota^* \alpha)_{b)}$. It follows that:
    \begin{enumerate}
        \item if $(\Sigma, \bar{g})$ has vanishing scalar curvature, then $\iota^* d \alpha$ is a constant multiple of the volume form on $(\Sigma,\bar{g})$;
        \item if $(\Sigma,\bar{g})$ has everywhere non-vanishing scalar curvature, then
        $$2 \bar{\nabla}_a (\bar{Sc}^{-1} \bar{\nabla}_{[c} B_{b]}{}^c) - B_{ab} = 0\,. $$
    \end{enumerate}
\end{theorem}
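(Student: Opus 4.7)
The plan is to work entirely on $\Sigma$ and to exploit its two-dimensionality. From the definition $B_{ab} = \bar g_{ab} - \bar\nabla_{(a}\bar\alpha_{b)}$ (where $\bar\alpha = \iota^*\alpha$), together with the fact that any 2-form on a 2-manifold is a scalar multiple of the volume form $\epsilon$, the covariant derivative of $\bar\alpha$ decomposes as
\[
\bar\nabla_a\bar\alpha_b \;=\; \bar g_{ab} - B_{ab} + \tfrac{1}{2}f\,\epsilon_{ab}\,,
\]
where $f$ is defined by $\iota^* d\alpha = d\bar\alpha = f\,\epsilon$. Observe that in this notation, conclusion~(1) is precisely the assertion that $f$ is constant on $\Sigma$.

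Next I derive a central integrability condition by computing $[\bar\nabla_c,\bar\nabla_a]\bar\alpha_b$ in two ways. Direct differentiation of the above display gives $\bar\nabla_c\bar\nabla_a\bar\alpha_b = -\bar\nabla_c B_{ab} + \tfrac{1}{2}(\bar\nabla_c f)\epsilon_{ab}$ (using $\bar\nabla\bar g = 0 = \bar\nabla\epsilon$), while the 2D curvature identity $\bar R_{abcd} = \tfrac{\bar{Sc}}{2}(\bar g_{ac}\bar g_{bd} - \bar g_{ad}\bar g_{bc})$ yields $[\bar\nabla_c,\bar\nabla_a]\bar\alpha_b = \tfrac{\bar{Sc}}{2}(\bar\alpha_a\bar g_{cb} - \bar\alpha_c\bar g_{ab})$. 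Equating the two expressions and contracting on $b$ with $\bar g^{bc}$ (using $\bar g^{ab}\epsilon_{ab} = 0$) produces the central identity
\[
2\bar\nabla_{[c}B_{a]}{}^c \;=\; -\tfrac{\bar{Sc}}{2}\bar\alpha_a + \tfrac{1}{2}(\bar\nabla_c f)\,\epsilon_a{}^c\,. \qquad (\star)
\]

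For case~(1), where $\bar{Sc}\equiv 0$, $(\star)$ reduces to $4\bar\nabla_{[c}B_{a]}{}^c = (\bar\nabla_c f)\,\epsilon_a{}^c$; contracting with $\epsilon^{ad}$ and using the 2D identity $\epsilon^{ad}\epsilon_a{}^c = \bar g^{cd}$ then solves for $\bar\nabla^d f$ in terms of a curl-type expression in $B$. To promote this to $df = 0$ I will invoke additional rigidity from the PCS hypotheses: working in adapted coordinates $(u,x^1,x^2)$ with $\nu = du$ and $\ell = \partial_u$, the combination of $\nabla\odot\alpha = g$ in the $(u,i)$-sector, $\nabla\nu = 0$, $T\cdot\nu = 0$, and the minimal-torsion relation $\alpha(T(\ell,\cdot)) = \mathcal{L}_\ell\alpha$ forces $\partial_u\alpha_i = 0$; combined with the explicit formula $B_{ij} = \tfrac{1}{2}\bigl(\alpha_{(i}h_{j)l}\alpha^l - |\alpha|^2 h_{ij}\bigr)$ (where $h = \mathcal{L}_\ell g|_\Sigma$) this provides enough structure to conclude that the curl-type expression vanishes, and hence that $f$ is constant.

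For case~(2), where $\bar{Sc}$ never vanishes, I solve $(\star)$ for $\bar\alpha_a$ and apply $\bar\nabla_b$:
\[
\bar\nabla_b\bar\alpha_a \;=\; -4\bar\nabla_b\bigl(\bar{Sc}^{-1}\bar\nabla_{[c}B_{a]}{}^c\bigr) + \bar\nabla_b\bigl(\bar{Sc}^{-1}(\bar\nabla_c f)\epsilon_a{}^c\bigr)\,.
\]
Substituting $\bar\nabla_b\bar\alpha_a = \bar g_{ab} - B_{ab} - \tfrac{1}{2}f\epsilon_{ab}$ on the left yields an equation linking $B$, $f$, and their derivatives. A second integrability obtained by taking $\bar\nabla^a$ of $(\star)$ (where the symmetry of the scalar Hessian makes $\epsilon^{ac}\bar\nabla_a\bar\nabla_c f = 0$) eliminates the residual $f$-terms; symmetrizing in $(a,b)$ then produces the displayed equation. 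The main obstacle is precisely this final bookkeeping: ensuring that the $f$-dependent pieces and the extraneous $\bar g_{ab}$-term cancel identically rather than imposing a spurious constraint. This requires careful use of the 2D $\epsilon$-identities $\epsilon^{ab}\epsilon_a{}^c = \bar g^{bc}$ and $\epsilon_c{}^d\epsilon_a{}^c = -\delta^d_a$, and is where the bulk of the technical calculation resides.
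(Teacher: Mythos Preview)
Your central identity $(\star)$ is correct, but it is not strong enough to prove the theorem, and the mechanisms you propose for closing the gap do not work. The missing idea is the one the paper's proof hinges on: the hypotheses force $B$ to be a \emph{Hessian}. From $\nabla\nu=0$ and $T^a{}_{bc}\nu_a=0$ one gets $d\nu=0$, so locally $\nu=df$; since $\nu(\ell)=1$, in adapted coordinates $f=t+h(x,y)$, and then the potential condition $\nabla\odot\alpha=g$ pulled back to $\Sigma$ yields exactly $\bar\nabla^2 h = B$. Applying the Ricci identity to the \emph{exact} 1-form $dh$ (rather than to $\bar\alpha$) gives $\bar R_{abcd}\bar\nabla^d h=\bar\nabla_{[a}B_{b]c}$ with no $f$-term at all, because $\bar\nabla_a\bar\nabla_b h$ is automatically symmetric. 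Both conclusions then drop out in one line: when $\bar{Sc}=0$ the untraced identity gives $\bar\nabla_{[a}B_{b]c}=0$ directly, and when $\bar{Sc}\neq 0$ tracing gives $\tfrac{\bar{Sc}}{2}\bar\nabla_b h=\bar\nabla_{[c}B_{b]}{}^c$, whence differentiating once and substituting $\bar\nabla^2 h=B$ yields the displayed equation with the correct coefficient $2$.

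By contrast, your route applies the Ricci identity to $\bar\alpha$, whose covariant derivative has the antisymmetric piece $\tfrac12 f\epsilon_{ab}$; this is precisely the origin of the stray $\bar\nabla f$-terms in $(\star)$, and of the factor $4$ (rather than $2$) and the extra $\bar g_{ab}$ you encounter in Case~2. These do not cancel by $\epsilon$-identities alone: $(\star)$ uses only the definition of $B$ and holds for \emph{any} 1-form $\bar\alpha$ on a surface, so it cannot by itself imply the theorem. Your proposed fix in Case~1 also fails concretely: the claim that the PCS conditions force $\partial_u\alpha_i=0$ is not true. Computing $\nabla_{(u}\alpha_{i)}=0$ in your coordinates gives $\partial_u\alpha_i=\alpha_c T^c{}_{ui}$, and by the minimal-torsion condition $\alpha(T(\ell,\cdot))=\mathcal{L}_\ell\alpha$ this is the tautology $\partial_u\alpha_i=\partial_u\alpha_i$, not a constraint. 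The formula $B_{ij}=\tfrac12(\alpha_{(i}h_{j)l}\alpha^l-|\alpha|^2 h_{ij})$ is likewise unsupported; nothing in the definition $B=\bar g-\bar\nabla\odot\bar\alpha$ ties $B$ to $\mathcal{L}_\ell g$ in that way.
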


\begin{proof}

    This proof relies on a characterization of tensors that can be expressed as the Hessian of some function as given by Robert Bryant~\cite{BryantSO}.

    Because $\nu(\ell) = 1$, it follows that $\theta$ is horizontal, namely if $H^*M := \operatorname{Ann}(\ell)$, then we require that  $\theta \in \Gamma(H^*M)$.

Because $\nabla_{[a} \nu_{b]} = 0$ and $T^a_{bc} \nu_a = 0$, it follows that
$$d \theta = - d \alpha\,,$$
which implies that, at least locally,
$$\theta = -\alpha + df$$
for some function $f \in C^\infty M$, and so $\nu = df$. Furthermore, note that because $\theta \in \Gamma(H^* M)$, it must be the case that $df(\ell) = 1$. Now in a neighborhood around a point $p \in \Sigma \subset M$, we may choose coordinates $(t,x,y)$ such that $\ell = \partial_t$ and $\partial_{x^i} \in \Gamma(HM)$ so that $\bar{g} = e^{2 \Omega(x,y)}(dx^2 + dy^2)$. In these coordinates, we have that
$$f = t + h(x,y)\,,$$
and so
$$\nu = dt + dh(x,y)\,.$$
Thus,
$$dh = \alpha + \theta - dt\,.$$

Now consider the horizontal component of $\bar{\nabla}_{(a}(\iota^* \alpha)_{b)}$. A straightforward computation shows that, because $ \nabla \odot\alpha = g$ and $dt = \nu - dh$ is the vertical component of $\alpha$, it follows that
$$\bar{\nabla}^2 h = B\,.$$

Differentiating this equation and skewing, we find that
\begin{align} \label{curv-constr}
    \bar{R}_{abcd} \bar{\nabla}^d h = \bar{\nabla}_{[a} B_{b]c}\,.
\end{align}
Now because we are in two dimensions, we have that
$$\bar{R}_{abcd} = \frac{\bar{Sc}}{2}(\bar{g}_{ac} \bar{g}_{bd} - \bar{g}_{ad} \bar{g}_{bc})\,.$$

Now consider the case where $\bar{Sc} = 0$. In that case, the Riemann curvature of $(\Sigma, \iota^* g)$ vanishes and hence $\bar{g}$ is locally isometric to the flat metric. As we have already chosen isothermal coordinates, we find that up to constant rescalings, $\bar{g} = dx^2 + dy^2$. So,
$$\partial_{[a} B_{b]c} = 0\,.$$
Expanding this expression and noting that $\bar{\nabla} (\bar{g}) = 0$, one finds that
$$\partial_c [\partial_a (\iota^*\alpha)_b - \partial_b (\iota^* \alpha)_a] = 0\,.$$
Thus, it follows that
$$\partial_x (\iota^* \alpha)_y - \partial_y (\iota^* \alpha)_x = c\,,$$
for some constant $c$, and so
$$2 d(\iota^* \alpha) = c dx \wedge dy\,,$$
and so in particular,
$$2 \iota^* d \alpha = c dx \wedge dy\,,$$
the volume form for $(\Sigma,\bar{g})$.

On the other hand, suppose that $\bar{Sc}$ is nowhere vanishing. Then tracing over the curvature condition in Equation~\ref{curv-constr}, we have that
$$\frac{\bar{Sc}}{2} \bar{\nabla}_b h= \bar{\nabla}_{[a} B_{b]}{}^a\,.$$
Because $\bar{Sc}$ is nowhere vanishing, we can divide through by $\bar{Sc}$ and differentiate again, noting that $\bar{\nabla}^2 h = B$, yielding
$$2 \bar{\nabla}_a (\bar{Sc}^{-1} \nabla_{[c} B_{b]}{}^c) - B_{ab} = 0\,.$$
This completes the proof.
    
\end{proof}

\begin{remark}
    Clearly, for sufficiently generic $\alpha$ and $\nabla$, the scalar curvature on any horizontal embedded surface $\Sigma$ will not obey any differential equations, nor will $d \alpha$ be a constant multiple of a volume form. As such, Theorem~\ref{thm:PCS-to-SCMs} establishes that only non-generic potential Carroll structures may be modified to obtain a special Carrollian manifold.
\end{remark}

\subsection{From SCMs to potential Carroll structures}
Following Theorem~\ref{thm:PCS-to-SCMs}, we will also see that the existence of a modification of the Ehresmann connection turning a special Carrollian manifold into a potential Carroll structure is highly restrictive. In particular, we find that the existence of such a modification implies that horizontal embedded hypersurfaces have non-isometric homotheties.
\begin{theorem} \label{conf-killing-constraint}
    Let $(M^d,g,\ell,\nu,\nabla)$ be a special Carrollian manifold with a spatial hypersurface embedding $\iota: \Sigma \hookrightarrow M$ satisfying $T\Sigma = \operatorname{ker}\nu$. Then, there exists a 1-form $\theta \in \Gamma(T^* M)$ such that $\alpha := \nu + \theta$ satisfies $\alpha(\ell) = 1$ and
    $$ \nabla_{(a} \alpha_{b)} = g_{ab}$$
    if and only if $\iota^*g$ has a non-isometric homothetic vector field.
\end{theorem}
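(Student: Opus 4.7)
The plan is to reduce the equation $\nabla_{(a}\alpha_{b)} = g_{ab}$ on $M$ to a condition on the horizontal hypersurface $\Sigma$, by trading $\alpha$ for the horizontal difference $\theta := \alpha - \nu$, and then to recognize the pulled-back equation as the homothety equation via dualization with $\bar g$. Since $\alpha(\ell) = 1 = \nu(\ell)$ forces $\theta(\ell) = 0$, and since $\nabla\nu = 0$ converts $\nabla\odot\alpha = g$ into $\nabla\odot\theta = g$, finding $\alpha$ is equivalent to finding a horizontal $\theta$ with $\nabla\odot\theta = g$. Moreover, the minimal-torsion axiom of an SCM forces $\nu(T(X,Y)) = 0$ for all $X,Y$ (combining $T(U,V) = 0$ on $\ker\nu$ with $\mathcal{L}_\ell\nu = 0$), so $d\nu = 0$ and one may choose adapted coordinates $(t,x^i)$ with $\ell = \partial_t$, $\nu = dt$, and $\Sigma = \{t = 0\}$; the horizontal-horizontal components of $\nabla$ are then precisely the Levi-Civita connection $\bar\nabla$ of $\bar g$ by Equation~\ref{Levi-Civita-SCM}.

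For the forward direction I would pull $\nabla\odot\theta = g$ back along $\iota$ to obtain $\bar\nabla_{(a}(\iota^*\theta)_{b)} = \bar g_{ab}$. The dualization $X^a := \bar g^{ab}(\iota^*\theta)_b$ then satisfies $(\mathcal{L}_X\bar g)_{ab} = 2\bar\nabla_{(a}X_{b)} = 2\bar g_{ab}$, exhibiting $X$ as a non-isometric homothetic vector field of scale $2$. For the reverse direction, given a homothety $X$ on $\Sigma$ with $\mathcal{L}_X\bar g = c\bar g$ and $c \neq 0$, I would rescale $X$ so that $c = 2$, set $\theta|_\Sigma := \bar g(X,\cdot)$, and then extend $\theta$ off of $\Sigma$ by solving, along the flow of $\ell$, the first-order ODE in $t$ produced by the mixed $(t,i)$ component of $\nabla\odot\theta = g$ (whose coefficient is pinned down by the minimal torsion). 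The $(t,t)$ component is automatic given the horizontality of $\theta$ and the Christoffel relations enforced by $\nabla\ell = 0$.

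The hard step is checking that the horizontal constraint $\bar\nabla_{(i}\theta_{j)} = \bar g_{ij}$ propagates along the flow of $\ell$. I would introduce the defect tensor $F_{ij} := \bar\nabla_{(i}\theta_{j)} - \bar g_{ij}$, which vanishes on $\Sigma$ by construction, and differentiate it in $t$ using the ODE for $\partial_t\theta_i$ together with the identity $\partial_t\bar g_{ij} = (\mathcal{L}_\ell g)(\partial_i,\partial_j)$ from Lemma~\ref{thm:CarrollianConnection}. The expectation is that, after using the algebraic constraints on the Christoffel symbols coming from $\nabla g = \nabla\ell = 0$ together with the minimality of the torsion, the inhomogeneous terms in $\partial_t F$ cancel and $\partial_t F_{ij}$ becomes a homogeneous linear expression in $F$ itself; ODE uniqueness would then force $F \equiv 0$, completing the construction of $\theta$ and hence of $\alpha = \nu + \theta$. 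The technical obstacle is precisely the bookkeeping needed to witness this cancellation; a cleaner coordinate-free route might be to apply $\mathcal{L}_\ell$ directly to $\nabla\odot\theta - g$ and exploit $\mathcal{L}_\ell\nu = 0$ together with the Bianchi-type identity for minimally torsionful Carrollian connections.
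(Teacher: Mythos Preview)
Your forward direction is essentially identical to the paper's: both observe that $\theta = \alpha - \nu$ is horizontal, pull back $\nabla\odot\theta = g$ to $\Sigma$ using that the horizontal part of the SCM connection is the leaf-wise Levi--Civita connection, and identify the result as the homothety equation for the dual vector field $\bar g^{ab}\bar\theta_b$. Your derivation of $d\nu = 0$ from the SCM axioms (via $\nu(T(\cdot,\cdot)) \equiv 0$, combining $T|_{\ker\nu}=0$ with $\mathcal{L}_\ell\nu=0$) is a correct and useful ingredient that the paper does not spell out.

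For the reverse direction the two approaches diverge. The paper dispatches it in one sentence, asserting that ``the other direction is evident by considering the contrapositive'': it claims without further argument that if no $\alpha$ exists on $M$ then no $\bar\theta$ solving the homothety equation exists on $\Sigma$. You instead attempt a genuine construction: set $\theta|_\Sigma := \bar g(X,\cdot)$ for the given homothety $X$, propagate $\theta$ off $\Sigma$ via the first-order ODE coming from the $(t,i)$ component of $\nabla\odot\theta = g$, and then argue that the horizontal defect $F_{ij} = \bar\nabla_{(i}\theta_{j)} - \bar g_{ij}$ satisfies a homogeneous linear evolution in $t$ and hence stays zero. This is the right instinct, and it is more than the paper offers; since the leaf metrics $\bar g_t$ vary with $t$, a homothety on $\Sigma_0$ does not trivially induce one on each $\Sigma_t$, so some propagation argument is genuinely needed.

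That said, your propagation step is explicitly left as an expectation. Concretely, after using the ODE $\partial_t\theta_i = \Gamma^k{}_{it}\theta_k$ with $\Gamma^k{}_{it}g_{kj} = \tfrac12\partial_t g_{ij}$ (forced by minimality and $\nabla g=0$), one must verify that the time-variation $\partial_t\bar\Gamma^k{}_{ij}$ of the leaf-wise Christoffel symbols conspires with the transport of $\theta$ so that the inhomogeneous term $\partial_t g_{ij}$ in $\partial_t F_{ij}$ is exactly absorbed. Until that cancellation is exhibited, the reverse implication is not established---though, in fairness, the paper's one-line contrapositive does not establish it either.
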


\begin{proof}
    First, suppose that there exists $\theta \in \Gamma(T^* M)$ such that $\alpha := \nu + \theta$ is a 1-form potential for the metric and is an Ehresmann connection. In that case, because $\nu$ is parallel, we have that
    $$\nabla_{(a} \theta_{b)} = g_{ab}\,.$$

    Furthermore, because $\nu(\ell) = \alpha(\ell) = 1$, we have that $\theta(\ell) = 0$; in other words, $\theta$ is horizontal and hence can be pulled back to the spatial hypersurface, $\bar{\theta} := \iota^* \theta$, without modification.

    It also follows, given the connection for a special Carrollian manifold, we have that $\iota^*(\nabla \theta) = \bar{\nabla} \bar{\theta}$, where $\bar{\nabla}$ is the Levi-Civita connection on $T\Sigma$. Writing $\bar{g} := \iota^* g$, we therefore have that
    \begin{align} \label{cke-spatial}
    \bar{\nabla}_{(a} \bar{\theta}_{b)} = \bar{g}_{ab}\,.
    \end{align}
    This is the equation for a homothetic vector field $\bar{g}^{ab} \theta_b$ with constant $1$, rather than $0$, as is the case for an isometry.

    The other direction is evident by considering the contrapositive and examining the proof above. Clearly, if there is no such 1-form $\alpha$, it follows easily that there is no $\bar{\theta}$ satisfying Equation~(\ref{cke-spatial}). Thus the theorem holds.   
\end{proof}

In general, Riemannian manifolds do not have homotheties, let alone non-isometric homotheties. To illustrate how restrictive this condition is, we consider the compact orientable case.
\begin{corollary}
    Let $(M,g,\ell,\nu,\nabla)$ be a special Carrollian manifold with a compact submanifold $\iota : \Sigma \hookrightarrow M$ satisfying $T\Sigma := \ker \nu$. Then, there exists no 1-form $\theta \in \Gamma(T^* M)$ such that $\alpha := \nu + \theta$ satisfies $\alpha(\ell) = 1$ and
    $$\nabla_{(a} \alpha_{b)} = g_{ab}\,.$$
\end{corollary}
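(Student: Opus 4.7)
The plan is to argue by contradiction, using Theorem~\ref{conf-killing-constraint} to translate the claim into a statement about the intrinsic geometry of $(\Sigma,\bar{g})$, where $\bar{g}:=\iota^*g$. If such a $\theta$ existed, Theorem~\ref{conf-killing-constraint} would produce a non-isometric homothetic vector field $\bar{X}$ on $\Sigma$, that is, a vector field satisfying $\mathcal{L}_{\bar{X}}\bar{g} = 2\bar{g}$ with non-zero homothety constant. It therefore suffices to prove the classical Riemannian fact that a compact Riemannian manifold admits no such vector field.

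The key step is the standard volume-scaling argument. Since $\Sigma$ is compact, $\bar{X}$ is automatically complete, so its flow $\phi_t$ is defined for all $t \in \mathbb{R}$. Integrating the homothety identity along the flow yields $\phi_t^* \bar{g} = e^{2t}\bar{g}$, and therefore for the Riemannian density,
$$\phi_t^*\, d\mu_{\bar{g}} = e^{nt}\, d\mu_{\bar{g}}, \qquad n := \dim \Sigma.$$
Integrating over $\Sigma$ and using diffeomorphism invariance of integration against densities gives
$$\operatorname{vol}(\Sigma) = \int_\Sigma \phi_t^*\, d\mu_{\bar{g}} = e^{nt}\, \operatorname{vol}(\Sigma).$$
Since $\Sigma$ is compact, $\operatorname{vol}(\Sigma)$ is a finite positive number, forcing $e^{nt}=1$ for every $t$, which is absurd.

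Hence no non-isometric homothety exists on $(\Sigma,\bar{g})$, and by Theorem~\ref{conf-killing-constraint}, no such $\theta$ exists. There is essentially no real obstacle to the proof: once Theorem~\ref{conf-killing-constraint} is invoked, the remainder is a textbook consequence of compactness. The only minor subtlety worth flagging is that orientability is not required in the hypothesis---working with the Riemannian volume density rather than a volume form sidesteps that issue entirely.
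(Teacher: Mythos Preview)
Your argument is correct and follows the same route as the paper: invoke Theorem~\ref{conf-killing-constraint} to reduce to the nonexistence of a non-isometric homothety on the compact Riemannian manifold $(\Sigma,\bar{g})$. The only difference is that the paper simply cites the classical result (attributed to Obata) that homotheties of a compact Riemannian manifold are isometries, whereas you supply the standard volume-scaling proof of that fact; your version is thus more self-contained, and your remark that densities rather than volume forms handle the non-orientable case is apt.
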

\begin{proof}
    From Theorem~\ref{conf-killing-constraint}, it suffices to check whether there exists a non-isometric homothety on $(\Sigma,\iota^* g)$. But as $\Sigma$ is compact, it is well-known~\cite{Obata1970} that the only homotheties on $(\Sigma,\iota^* g)$ are isometries. This completes the proof.
\end{proof}

\section*{Acknowledgements}
S.B. was partially supported by the Operational Programme Research Development and Education Project No. CZ.02.01.01/00/22-010/0007541. S.B. would also like to acknowledge the generous financial support of Matt Gimlim.

\bibliographystyle{ieeetr}
\bibliography{bib}

\end{document}